\newtheorem{theorem}{Theorem}[section]
\newtheorem{corollary}[theorem]{Corollary}
\newtheorem{lemma}[theorem]{Lemma}
\numberwithin{equation}{section}
\def\C{{\mathbb C}}
\def\D{{\mathbb D}}
\def\N{{\mathbb N}}
\def\R{{\mathbb R}}
\def\E{{\mathbb E}}
\def\T{{\mathbb T}}
\def\P{{\mathbb P}}
\newcommand{\cA}{\mathcal{A}}
\newcommand{\cE}{\mathcal{E}}
\def\eps{\varepsilon}
\begin{document}

\title{\bf Zero Distribution of Random Polynomials}

\author{Igor E. Pritsker}

\date{}

\maketitle

\begin{abstract}
We study global distribution of zeros for a wide range of ensembles of random polynomials. Two main directions are related to almost sure limits of the zero counting measures, and to quantitative results on the expected number of zeros in various sets. In the simplest case of Kac polynomials, given by the linear combinations of monomials with i.i.d. random coefficients, it is well known that their zeros are asymptotically uniformly distributed near the unit circumference under mild assumptions on the coefficients.  We give estimates of the expected discrepancy between the zero counting measure and the normalized arclength on the unit circle. Similar results are established for polynomials with random coefficients spanned by different bases, e.g., by orthogonal polynomials. We show almost sure convergence of the zero counting measures to the corresponding equilibrium measures for associated sets in the plane, and quantify this convergence. Random coefficients may be dependent and need not have identical distributions in our results.
\end{abstract}

\textbf{Keywords:} Polynomials, random coefficients, expected number of zeros, uniform distribution, random orthogonal polynomials.

\section{Introduction}

Zeros of polynomials of the form $P_n(z)=\sum_{k=0}^{n} A_k z^k,$ where $\{A_n\}_{k=0}^n$ are random coefficients,  have been studied by Bloch and P\'olya, Littlewood and Offord, Erd\H{o}s and Offord, Kac, Rice, Hammersley, Shparo and Shur, Arnold, and many other authors. The early history of the subject with numerous references is summarized in the books by Bharucha-Reid and Sambandham \cite{BR}, and by Farahmand \cite{Fa}. It is well known that, under mild conditions on the probability distribution of the coefficients, the majority of zeros of these polynomials accumulate near the unit circumference, being equidistributed in the angular sense.  Introducing modern terminology, we call a collection of random polynomials $P_n(z)=\sum_{k=0}^n A_k z^k,\ n\in\N,$ the ensemble of \emph{Kac polynomials}.
Let $\{Z_k\}_{k=1}^n$ be the zeros of a polynomial $P_n$ of degree $n$, and define the \emph{zero counting measure}
\[
\tau_n=\frac{1}{n}\sum_{k=1}^n \delta_{Z_k}.
\]
The fact of equidistribution for the zeros of random polynomials can now be expressed via the weak convergence of $\tau_n$ to the normalized arclength measure $\mu_{\T}$ on the unit circumference, where $d\mu_{\T}(e^{it}):=dt/(2\pi).$ Namely, we have that $\tau_n \stackrel{w}{\rightarrow} \mu_{\T}$ with probability 1 (abbreviated as a.s. or almost surely). More recent work on the global distribution of zeros of Kac polynomials include papers of Hughes and Nikeghbali \cite{HN}, Ibragimov and Zeitouni \cite{IZ}, Ibragimov and Zaporozhets \cite{IZa}, Kabluchko and Zaporozhets \cite{KZ1,KZ2}, etc. In particular, Ibragimov and Zaporozhets \cite{IZa} proved that if the coefficients are independent and identically distributed, then the condition $\E[\log^+|A_0|]<\infty$ is necessary and sufficient for $\tau_n \stackrel{w}{\rightarrow} \mu_{\T}$ almost surely. Here, $\E[X]$ denotes the expectation of a random variable $X$.

The majority of available results require the coefficients $\{A_k\}_{k=0}^n$ be independent and identically distributed (i.i.d.) random variables. This assumption is certainly natural from probabilistic point of view. However, it is not necessary as the following simple example shows. If $A_k=\xi,\ k=0,1,2,\ldots,$ are identical (hence dependent), where $\xi$ is a complex random variable, then we deal with the family of polynomials
\[
P_n(z)=\sum_{k=0}^n \xi z^k = \xi\, \frac{z^{n+1}-1}{z-1}, \quad n\in\N.
\]
The zeros of $P_n$ are uniformly distributed on $\T,$ being the $n+1$-st roots of unity except $z=1$. Furthermore, $\tau_n \stackrel{w}{\rightarrow} \mu_{\T}$ almost surely, provided $\xi$ does not vanish with positive probability. The assumption of identical distribution for coefficients is not necessary for $\tau_n \stackrel{w}{\rightarrow} \mu_{\T}$ a.s. either. Thus one of our main goals is to remove unnecessary restrictions, and prove results on zeros of polynomials whose coefficients need not have identical distributions and may be dependent. 

Another interesting direction is related to the study of zeros of random polynomials spanned by various bases, e.g., by orthogonal polynomials. These questions were considered by Shiffman and Zelditch \cite{SZ1}-\cite{SZ3}, Bloom \cite{Bl1} and \cite{Bl2}, Bloom and Shiffman \cite{BS}, Bloom and Levenberg \cite{BL}, Bayraktar \cite{Ba} and others. It is of importance for us that many mentioned papers used potential theoretic approach to study the limiting zero distribution, including that of multivariate polynomials. We develop such ideas here, and extend them to more general bases and classes of random coefficients, but only for the univariate case.

We do not discuss the local scaling limit results on the zeros of random polynomials as this falls beyond the scope of the paper. Instead, we direct the reader to recent interesting papers on this topic by Tao and Vu \cite{TV}, and by Sinclair and Yattselev \cite{SY}.

The rest of our paper is organized as follows. Section 2 deals with almost sure convergence of the zero counting measures for polynomials with random coefficients that satisfy only weak $\log$-integrability assumptions. Section 3 develops the discrepancy results of \cite{PS} and \cite{PY}, and establishes expected rates of convergence of the zero counting measures to the equilibrium measures. Again, the random coefficients in Section 3 are neither independent nor identically distributed, and their distributions only satisfy certain uniform bounds for the fractional and logarithmic moments. We also consider random polynomials spanned by general bases in Sections 2 and 3, which includes random orthogonal polynomials and random Faber polynomials on various sets in the plane. All proofs are given in Section 4.

\section{Asymptotic Equidistribution of Zeros}

We first study the limiting behavior of the normalized zero counting measures for sequences of polynomials of the form
\[
P_n(z)=\sum_{k=0}^n A_k z^k, \quad n\in\N.
\]
Let $A_k,\ k=0,1,2,\ldots,$ be complex valued random variables that are not necessarily independent, nor they are required to be identically distributed. Recall that the distribution function of $|A_k|$ is defined by $F_k(x)=\P(\{|A_k|\le x\}),\ x\in\R,$ see Gut \cite[Section 2.1]{Gut}. We use the following assumptions on random coefficients in this section.

\textbf{Assumption 1} There is $N\in\N$ and a decreasing function $f:[a,\infty)\to[0,1],\ a>1,$ such that
\begin{align} \label{2.1}
\int_a^{\infty} \frac{f(x)}{x}\,dx < \infty \mbox{ and } 1-F_k(x)\le f(x),\ \forall\, x\in[a,\infty), 
\end{align}
holds for all $k=0,1,2,\ldots.$

\textbf{Assumption 2} There is $N\in\N$ and an increasing function $g:[0,b]\to[0,1],\ 0<b<1,$ such that
\begin{align} \label{2.2}
\int_0^b \frac{g(x)}{x}\,dx < \infty  \mbox{ and } F_k(x)\le g(x),\ \forall\, x\in[0,b],
\end{align}
holds for all $k=0,1,2,\ldots.$

If $F(x)$ is the distribution function of $|X|$, where $X$ is a complex random variable, then 
\[
\E[\log^+|X|]<\infty \quad \Leftrightarrow \quad \int_a^{\infty} \frac{1-F(x)}{x}\,dx < \infty,\ a\ge 0,
\]
and
\[
\E[\log^-|X|]<\infty \quad \Leftrightarrow \quad \int_0^b \frac{F(x)}{x}\,dx < \infty,\ b>0,
\]
see, e.g., Theorem 12.3 of Gut \cite[p. 76]{Gut}. Hence when all random variables $|A_k|,\ k=0,1,\ldots,$ are identically distributed, one can state assumptions \eqref{2.1}-\eqref{2.2} in a more compact equivalent form
\[
\E[|\log|A_0||]<\infty.
\]
Assumption \eqref{2.2} readily implies that $\P(\{A_k=0\})=0$ for all $k$, i.e., the probability measures of the coefficients cannot have point masses at $0$. But they can have point masses elsewhere, and need not possess densities. 

Schehr and Majumdar \cite{SM} considered random polynomials with Gaussian coefficients $A_k$ that have mean zero and variance $\sigma_k^2=e^{-k^\alpha}$, and found that the expected number of real zeros for $P_n(z)=\sum_{k=0}^n A_k z^k$ is asymptotic to $n$ for $\alpha>2$. Thus almost sure equidistribution of zeros near the unit circumference can clearly fail in absence of uniform assumptions on coefficients.

We show in Lemma \ref{lem4.2} of Section \ref{secP} that if both \eqref{2.1} and \eqref{2.2} hold, then 
\[
\lim_{n\to\infty} |A_0|^{1/n} = \lim_{n\to\infty} |A_n|^{1/n} = \lim_{n\to\infty} \max_{0\le k\le n} |A_k|^{1/n} = 1 \quad\mbox{ a.s.}
\]
These facts allow to apply potential theoretic techniques developed to study the asymptotic zero distribution of deterministic polynomials (see Andrievskii and Blatt \cite{AB} for an overview). We start with the following result for the Kac ensemble.

\begin{theorem} \label{thm2.1}
If the coefficients of $P_n(z)=\sum_{k=0}^n A_k z^k,\ n\in\N,$ are complex random variables that satisfy assumptions \eqref{2.1} and \eqref{2.2}, then the zero counting measures $\tau_n$ for this sequence of polynomials converge almost surely to $\mu_{\T}$ as $n\to\infty$.
\end{theorem}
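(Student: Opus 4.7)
My plan is potential-theoretic. Factoring $P_n(z)=A_n\prod_{k=1}^n(z-Z_k)$ gives
\[
u_n(z):=\frac{1}{n}\log|P_n(z)|=\frac{1}{n}\log|A_n|+\int\log|z-w|\,d\tau_n(w),
\]
and $\log^+|z|$ is exactly the logarithmic potential of $\mu_{\T}$. It therefore suffices to prove that, almost surely, $u_n\to\log^+|\cdot|$ in $L^1_{\mathrm{loc}}(\C)$; taking distributional Laplacians and using $\frac{1}{n}\log|A_n|\to 0$ a.s.\ then delivers the weak convergence $\tau_n\stackrel{w}{\rightarrow}\mu_{\T}$.

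The starting point is Lemma \ref{lem4.2}, which under hypotheses \eqref{2.1}--\eqref{2.2} furnishes an event $\Omega_0$ of probability one on which $|A_0|^{1/n}\to1$, $|A_n|^{1/n}\to1$, and $M_n^{1/n}\to1$, where $M_n:=\max_{0\le k\le n}|A_k|$. Restricting to $\Omega_0$, the elementary estimate $|P_n(z)|\le(n+1)M_n\max(1,|z|)^n$ immediately yields
\[
\limsup_{n\to\infty}u_n(z)\le\log^+|z|
\]
locally uniformly on $\C$. Coupled with the Jensen-type identity $\int\log|w|\,d\tau_n(w)=\frac{1}{n}\log(|A_0|/|A_n|)\to 0$, this upper bound confines the bulk of the zeros to a bounded region, and hence $\{\tau_n\}$ is tight.

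To identify the limit, pass to any weak-$*$ subsequential limit $\nu$ of $\{\tau_n\}$ (a probability measure by tightness). Standard semicontinuity arguments for potentials of weakly convergent measures (see Andrievskii--Blatt \cite{AB}) give $U^{\nu}(z):=\int\log|z-w|\,d\nu(w)\le\log^+|z|$. Matching lower estimates at two sentinel locations close the gap: at the origin, $u_n(0)=\frac{1}{n}\log|A_0|\to 0=\log^+|0|$; and for $|z|\ge R_0$ with $R_0$ sufficiently large, the leading term of $P_n$ dominates on $\Omega_0$, giving $u_n(z)\to\log|z|$. The uniqueness of measures with a prescribed logarithmic potential then forces $\nu=\mu_{\T}$, and convergence of the full sequence follows.

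The principal obstacle is upgrading these one-sided bounds on subharmonic functions to a full identification of the weak limit: an upper bound on $u_n$ alone would permit accumulation of zeros strictly inside $\overline{\D}$ away from $\T$. What prevents this is the simultaneous control of $|A_0|^{1/n}$ \emph{and} $|A_n|^{1/n}$ coming from \eqref{2.2} and \eqref{2.1} respectively, anchoring the candidate potential at both $0$ and $\infty$. The exclusion of a point mass at $0$ in assumption \eqref{2.2}, which underlies $|A_0|^{1/n}\to1$, is essential precisely here.
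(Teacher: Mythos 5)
Your approach is a direct potential-theoretic argument; the paper instead applies the Blatt--Saff--Simkani theorem (Theorem BSS, proved in \cite{BSS}) as a black box, feeding it precisely the three limits from Lemma~\ref{lem4.2}: $|A_n|^{1/n}\to 1$ and $\max_{0\le k\le n}|A_k|^{1/n}\to 1$ verify \eqref{4.7}, and $|A_0|^{1/n}\to 1$ verifies \eqref{4.9} at $K=\{0\}$, hence \eqref{4.8}. You are essentially re-deriving BSS in this special case, which is a legitimate alternative route and correctly identifies the same ingredients.

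However, there are two genuine problems. First, the sentinel at infinity is wrong as stated. Dominance of the leading term $A_n z^n$ over $\sum_{k<n}A_k z^k$ requires roughly $|z| > 1 + M_n/|A_n|$ where $M_n=\max_k|A_k|$, and the hypotheses only give $M_n^{1/n}\to 1$ and $|A_n|^{1/n}\to 1$; the ratio $M_n/|A_n|$ may be as large as $e^{o(n)}$ and is not bounded, so no fixed $R_0$ works. In fact, under Assumptions~1--2 alone the zeros of $P_n$ need not lie in any fixed disk. The correct anchor at infinity is not this pointwise estimate but the fact (which you do get from tightness) that $\nu$ is a probability measure with support essentially in $\overline{\D}$, so $U^{\nu}(z)=\log|z|+o(1)$ as $z\to\infty$.

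Second, the final step is under-justified. From the ingredients you actually have — $U^{\nu}\le\log^{+}$ everywhere (which requires the lower envelope theorem, or the $L^1_{\mathrm{loc}}$ convergence $U^{\tau_n}\to U^{\nu}$ plus mean-value arguments, not merely "semicontinuity"/principle of descent, which gives the opposite one-sided inequality $\limsup U^{\tau_n}\le U^{\nu}$) together with $U^{\nu}(0)=0$ and $\nu(\C)=1$ — the conclusion $\nu=\mu_{\T}$ does not follow by citing "uniqueness of measures with a prescribed logarithmic potential," because you have not established $U^{\nu}=\log^{+}$. Closing this requires applying the maximum principle to the subharmonic $U^{\nu}\le 0$ on $\D$ with $U^{\nu}(0)=0$ to force $U^{\nu}\equiv 0$ on $\D$ (hence $\nu|_{\D}=0$), and then a separate exterior argument (e.g., Frostman unicity, or an exterior maximum principle using $U^{\nu}\le\log|z|$ with $U^{\nu}-\log|z|\to 0$ at $\infty$) to finish. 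These steps are exactly what Theorem~BSS and \eqref{4.9} encapsulate; they should be carried out rather than asserted.
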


We next consider more general ensembles of random polynomials 
$$P_n(z) = \sum_{k=0}^n A_k B_k(z)$$
spanned by various bases $\{B_k\}_{k=0}^{\infty}.$ Let $B_k(z)=\sum_{j=0}^k b_{j,k}z^j$, where $b_{j,k}\in\C$ for all $j$ and $k$, and $b_{k,k}\neq 0$ for all $k$, be a polynomial basis, i.e., a linearly independent set of polynomials. Note that $\text{deg}\  B_k=k$ for all $k\in\N\cup\{0\}.$ Given a compact set $E\subset\C$ of positive logarithmic capacity $\text{cap}(E)$ (cf. Ransford \cite{Ra}), we assume that
\begin{align} \label{2.3}
\limsup_{k\to\infty} \|B_k\|_E^{1/k} \le 1 \quad \mbox{and} \quad \lim_{k \rightarrow \infty} |b_{k,k}|^{1/k}=1/\textup{cap}(E),
\end{align}
where $\|B_k\|_E:=\sup_E|B_k|.$ Condition \eqref{2.3} holds for many standard bases used for representing analytic functions on $E$, e.g., for various sequences of orthogonal polynomials (cf. Stahl and Totik \cite{ST}) and for Faber polynomials (see Suetin \cite{Su}). In the former case, random polynomials spanned by such bases are called random orthogonal polynomials. Their asymptotic zero distribution was recently studied in a series of papers by Shiffman and Zelditch \cite{SZ2}, Bloom \cite{Bl1} and \cite{Bl2}, Bloom and Shiffman \cite{BS}, Bloom and Levenberg \cite{BL} and Bayraktar \cite{Ba}. In particular, it was shown that the counting measures of zeros converge weakly to the equilibrium measure of $E$ denoted by $\mu_E$, which is a positive unit Borel measure supported on the outer boundary of $E$ \cite{Ra}. Most of mentioned papers also considered multivariate polynomials. They assumed that the basis polynomials are orthonormal with respect to a measure satisfying the Bernstein-Markov property, and that the coefficients are complex i.i.d. random variables with uniformly bounded distribution density function with respect to the area measure, and with proper decay at infinity. 

We develop this line of research by using the results of Blatt, Saff and Simkani \cite{BSS} for deterministic polynomials of a single variable. In particular, we relax conditions on the random coefficients and consider more general choices of the bases.

\begin{theorem} \label{thm2.2}
Suppose that a compact set $E\subset\C,\ \textup{cap}(E)>0,$ has empty interior and connected complement. If the coefficients $A_k$ satisfy \eqref{2.1}-\eqref{2.2}, and the basis polynomials $\{B_k\}_{k=0}^{\infty}$ satisfy \eqref{2.3}, then the zero counting measures of $P_n(z) = \sum_{k=0}^n A_k B_k(z)$ converge almost surely to $\mu_E$ as $n\to\infty$.
\end{theorem}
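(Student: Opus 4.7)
The plan is to verify pathwise, on an event of full probability, the deterministic hypotheses of the Blatt--Saff--Simkani theorem \cite{BSS} for a compact set $E$ with empty interior and connected complement, and then apply that theorem pointwise on this event. To begin, I would invoke Lemma~\ref{lem4.2} (driven by \eqref{2.1}--\eqref{2.2}) to produce an event $\Omega_0$ of probability one on which
\[
\lim_{n\to\infty}|A_n|^{1/n}=1 \quad\text{and}\quad \lim_{n\to\infty}\max_{0\le k\le n}|A_k|^{1/n}=1,
\]
and work on $\Omega_0$ throughout.

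Since the leading coefficient of $P_n(z)=\sum_{k=0}^n A_k B_k(z)$ is $A_n b_{n,n}$, combining the first limit above with $\lim_k|b_{k,k}|^{1/k}=1/\textup{cap}(E)$ from \eqref{2.3} yields
\[
\lim_{n\to\infty}|A_n b_{n,n}|^{1/n}=\frac{1}{\textup{cap}(E)},
\]
which is the required leading-coefficient asymptotic. To control the sup-norm on $E$, I fix $\eps>0$ and use \eqref{2.3} to choose $C_\eps>0$ so that $\|B_k\|_E\le C_\eps(1+\eps)^k$ for all $k\ge 0$. Then
\[
\|P_n\|_E \le \sum_{k=0}^n|A_k|\,\|B_k\|_E \le (n+1)\,C_\eps(1+\eps)^n\,\max_{0\le k\le n}|A_k|.
\]
Taking $n$-th roots and applying the second limit from Lemma~\ref{lem4.2} gives $\limsup_n\|P_n\|_E^{1/n}\le 1+\eps$; letting $\eps\downarrow 0$ delivers $\limsup_n\|P_n\|_E^{1/n}\le 1$ on $\Omega_0$.

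With both hypotheses established, the theorem of Blatt--Saff--Simkani \cite{BSS} (the potential-theoretic tool alluded to right before the statement of Theorem~\ref{thm2.2}) produces $\tau_n\xrightarrow{w}\mu_E$ pathwise on $\Omega_0$, giving the stated almost-sure convergence. The one place where care is genuinely required is ruling out escape of zeros to infinity, so that every weak-$*$ subsequential limit of $\tau_n$ is supported in the polynomial convex hull of $E$ (which, under the hypotheses on $E$, equals $E$); this is absorbed into the BSS machinery, whose inputs are exactly the two asymptotic estimates on the leading coefficient and sup-norm, and amounts to the observation that a positive fraction of zeros escaping would be incompatible with the matched lower bound $|A_n b_{n,n}|^{1/n}\to 1/\textup{cap}(E)$ and upper bound $\limsup_n\|P_n\|_E^{1/n}\le 1$.
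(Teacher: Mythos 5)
Your proof is essentially correct and follows the same route as the paper: extract the full-probability event from Lemma~\ref{lem4.2}, verify the leading-coefficient asymptotics from \eqref{4.4} and \eqref{2.3}, bound $\|P_n\|_E^{1/n}$ using the triangle inequality together with \eqref{4.6} and \eqref{2.3}, and invoke Theorem~BSS pathwise. The only imprecision is in your final paragraph, which mischaracterizes where the geometric hypotheses on $E$ enter. The version of Theorem~BSS used in the paper has a \emph{third} hypothesis \eqref{4.8}, which requires that $\tau_n(A)\to 0$ for every closed $A$ contained in a \emph{bounded} component of $\C\setminus\textup{supp}\,\mu_E$; this is there to rule out zeros piling up inside the set (e.g., in the interior of a disk when $\textup{supp}\,\mu_E$ is a circle), not to rule out escape to infinity. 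Escape to infinity is already precluded by the two sup-norm/leading-coefficient estimates you verified, and is not the place where the hypotheses ``empty interior and connected complement'' are needed. What those hypotheses buy is that $\C\setminus\textup{supp}\,\mu_E$ has \emph{no} bounded components, so \eqref{4.8} is vacuous -- this is precisely what the paper states in its first sentence of the proof. You do gesture at this (``polynomial convex hull of $E$ equals $E$''), but your framing conflates two distinct concerns, and the claim that BSS's ``inputs are exactly the two asymptotic estimates'' is not accurate in general -- it happens to reduce to those two here only because \eqref{4.8} is trivially satisfied. In the theorems that follow (\ref{thm2.4}, \ref{thm2.5}), where $E$ has nonempty interior, the third BSS condition is exactly the nontrivial extra input, so it is worth keeping it separate conceptually.
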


Two most interesting applications of this result are related to random orthogonal  and random Faber polynomials. Orthogonality below is considered with respect to the weighted arclength measure $w(s)\,ds$ on $E,$ and the definition of Faber polynomials may be found in Chapter 2 of \cite{Su}.

\begin{corollary} \label{cor2.3}
Assume that conditions \eqref{2.1}-\eqref{2.2} hold for the coefficients.\\
\textup{(i)} Suppose that $E$ is a finite union of rectifiable Jordan arcs with connected complement. If the basis polynomials $B_k$ are orthonormal with respect to a positive Borel measure $\mu$ supported on $E$ such that the Radon-Nikodym derivative $w(s)=d\mu/ds>0$ for almost every $s,$ then \eqref{2.3} is satisfied and $\tau_n$ converge almost surely to $\mu_E$ as $n\to\infty$.\\
\textup{(ii)} Suppose that $E$ is a compact connected set with empty interior and connected complement, and that $E$ is not a single point. If the basis polynomials $B_k$ are
the Faber polynomials of $E$, then \eqref{2.3} holds true and $\tau_n$ converge almost surely to $\mu_E$ as $n\to\infty$. 
\end{corollary}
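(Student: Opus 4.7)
My plan is to derive both statements in Corollary~\ref{cor2.3} as direct consequences of Theorem~\ref{thm2.2}. In each case the hypothesis on the random coefficients is already imposed as \eqref{2.1}--\eqref{2.2}, and $E$ is assumed to have empty interior and connected complement (a finite union of rectifiable Jordan arcs automatically has empty planar interior, so both parts fit the topological setting of Theorem~\ref{thm2.2}). The only nontrivial work is therefore to verify the two basis conditions in \eqref{2.3}, namely
\[
\limsup_{k\to\infty} \|B_k\|_E^{1/k} \le 1 \quad\mbox{and}\quad \lim_{k\to\infty}|b_{k,k}|^{1/k} = 1/\textup{cap}(E).
\]
Once \eqref{2.3} is established, Theorem~\ref{thm2.2} immediately yields $\tau_n \stackrel{w}{\rightarrow} \mu_E$ almost surely in both settings.

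For part (i), I would invoke the Stahl--Totik theory of regular measures (\cite{ST}, Chapters 3--4). For a finite union of rectifiable Jordan arcs with connected complement, the positivity of the weight $w = d\mu/ds > 0$ almost everywhere with respect to arclength is a classical Erd\H{o}s--Tur\'an-type sufficient condition for $\mu$ to lie in the regularity class $\textup{Reg}$. Once $\mu\in\textup{Reg}$, the standard $n$th-root asymptotics from \cite{ST} give simultaneously
$\lim_{k\to\infty}|b_{k,k}|^{1/k} = 1/\textup{cap}(E)$ and $\lim_{k\to\infty}\|B_k\|_E^{1/k}=1$, which is exactly \eqref{2.3}.

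For part (ii), I would appeal to the classical asymptotic theory of Faber polynomials (\cite{Su}, Chapter 2). With the standard normalization in which $F_k$ is the polynomial part of $\Phi(z)^k$, where $\Phi$ is the exterior conformal map of $\C\setminus E$ onto $\{|w|>1\}$ normalized by $\Phi(z)=z/\textup{cap}(E)+O(1)$ as $z\to\infty$, the leading coefficient of $F_k$ equals $1/\textup{cap}(E)^k$, so $|b_{k,k}|^{1/k}=1/\textup{cap}(E)$ holds on the nose. The other half of \eqref{2.3}, $\|F_k\|_E^{1/k}\to 1$, follows from the classical fact that $\|F_k\|_E$ grows at most subexponentially for any continuum $E$ of positive capacity with connected complement (indeed, for many regular boundaries it is bounded).

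The main obstacle is matching the hypotheses in part (i) to a concrete regularity criterion in \cite{ST}, since $E$ is neither an interval nor of positive planar measure: one must invoke a version of the Erd\H{o}s--Tur\'an criterion suitable for unions of rectifiable arcs, using that arclength on such $E$ is a reference measure against which the potential-theoretic notion of regularity is well understood. In part (ii) the verification is essentially bookkeeping with the chosen normalization of $\Phi$. After these two verifications, both conclusions reduce to a single citation of Theorem~\ref{thm2.2}.
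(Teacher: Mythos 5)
Your overall reduction is exactly the paper's: verify \eqref{2.3} for each basis, note that a finite union of arcs has empty interior and connected complement, and cite Theorem~\ref{thm2.2}. Part (ii) matches the paper essentially verbatim — the Faber normalization gives $|b_{k,k}|=\textup{cap}(E)^{-k}$ on the nose, and the paper cites K\"ovari--Pommerenke for the polynomial bound $\|B_k\|_E=O(k^\alpha)$, $\alpha<1/2$, which is your subexponential-growth claim made precise.

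In part (i), however, you identify the right obstacle but do not actually close it. The Erd\H{o}s--Tur\'an criterion in Stahl--Totik (Corollary 4.1.2) is a condition relative to the equilibrium measure $\mu_E$, while the hypothesis here is $w=d\mu/ds>0$ a.e.\ with respect to arclength. To transfer between the two one needs to know that on a finite union of rectifiable arcs the equilibrium measure $\mu_E$ is itself absolutely continuous with respect to $ds$ and that $d\mu_E/ds>0$ almost everywhere. The paper supplies exactly these facts: the density formula $d\mu_E=\tfrac{1}{2\pi}\bigl(\tfrac{\partial g_E}{\partial n_+}+\tfrac{\partial g_E}{\partial n_-}\bigr)\,ds$ (from \cite{Pr2}) together with the a.e.\ positivity of $d\mu_E/ds$ (Garnett--Marshall \cite{GM}), and then observes that the proof of Corollary 4.1.2 in \cite{ST} goes through in the rectifiable (not just smooth) setting once both $\mu$ and $\mu_E$ are compared through $ds$. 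Your sketch says ``one must invoke a version of the Erd\H{o}s--Tur\'an criterion suitable for unions of rectifiable arcs'' but leaves it at that; without the comparison of $ds$ to $\mu_E$, the hypothesis $w>0$ a.e.\ $ds$ does not by itself yield $\mu\in\textup{Reg}$. So the plan is correct and the same as the paper's, but part (i) is not complete as written.
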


If the interior of $E$ is not empty, we often need extra conditions to prevent excessive accumulation of zeros there. However, these additional assumptions may be replaced by more specific choices of the basis and geometric properties of $E$ as in the following result. If $E$ is a finite union of rectifiable curves and arcs, we call the polynomials orthonormal with respect to the arclength measure $ds$ by Szeg\H{o} polynomials. When $E$ is a compact set of positive area (2-dimensional Lebesgue measure on $\C$), we call the polynomials orthonormal with respect to the area measure $dA$ on $E$ by Bergman polynomials.

\begin{theorem} \label{thm2.4}
Suppose that $E$ is the closure of a Jordan domain with analytic boundary, and that the basis $\{B_k\}_{k=0}^{\infty}$ is given either by Szeg\H{o}, or by Bergman, or by Faber polynomials. If \eqref{2.1}-\eqref{2.2} hold for the coefficients $A_k$, then the zero counting measures of $P_n(z) = \sum_{k=0}^n A_k B_k(z)$ converge almost surely to $\mu_E$ as $n\to\infty$.
\end{theorem}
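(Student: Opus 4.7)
The plan is to combine Lemma \ref{lem4.2} with the classical strong asymptotics of the three bases, together with analyticity of $\partial E$, to control $|P_n|$ both just outside and just inside $\partial E$; taking distributional Laplacians of $\frac{1}{n}\log|P_n|$ on a collar neighborhood of $\partial E$ then yields $\tau_n\stackrel{w}{\to}\mu_E$ via tightness. First I would invoke Lemma \ref{lem4.2} to pass to a full-probability sample path on which $\lim_{n\to\infty}|A_n|^{1/n}=1$ and $\limsup_{n\to\infty}\max_{0\le k\le n}|A_k|^{1/n}=1$; a Borel-Cantelli argument based on Assumption 1 also delivers $\limsup_{k\to\infty}|A_k|^{1/k}\le 1$ a.s., needed below. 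Each of the three bases satisfies \eqref{2.3} on a Jordan domain with analytic boundary by classical theory (Stahl-Totik \cite{ST} for Szeg\H{o} and Bergman polynomials, Suetin \cite{Su} for Faber), so combined with the coefficient bounds this yields $\limsup_{n\to\infty}\|P_n\|_E^{1/n}\le 1$ and $|A_n b_{n,n}|^{1/n}\to 1/\textup{cap}(E)$ almost surely.

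The key geometric ingredient is that analyticity of $\partial E$ allows the exterior conformal map $\Phi$ to extend by Schwarz reflection to a collar neighborhood $V$ of $\partial E$ meeting both $\C\setminus E$ and $E^\circ$, with $|\Phi|>\rho$ on $V$ for some $\rho<1$. For each of the three bases the classical strong exterior asymptotic extends analytically to $V$:
\[
B_k(z) = c(z)\,\Phi(z)^k\,(1+o(1)) \quad\text{uniformly on compacta of }V,
\]
where $c:V\to\C$ is a specific nowhere-zero holomorphic function (namely $c\equiv 1$ for Faber, $c=1/D$ with $D$ the Szeg\H{o} function in the Szeg\H{o} case, and an analogous Carleman-Suetin factor for Bergman). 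On the interior portion $V\cap E^\circ$, $P_n(z)$ is therefore well-approximated by $c(z)Q_n(w)$ at $w=\Phi(z)\in\{|w|<1\}$, where $Q_n(w)=\sum_{k=0}^n A_k w^k$. From $\limsup|A_k|^{1/k}\le 1$ the sequence $Q_n$ converges almost surely on the unit disk to a random analytic function $Q(w)=\sum_{k=0}^\infty A_k w^k$, which is nontrivial because $\P(\{A_0=0\})=0$. Hence $Q^{-1}(0)$ is discrete and Fubini gives $\frac{1}{n}\log|P_n(z)|\to 0=g_E(z,\infty)$ for almost every $z\in V\cap E^\circ$ almost surely. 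Combined with $\frac{1}{n}\log|P_n(z)|\to\log|\Phi(z)|=g_E(z,\infty)$ on the exterior portion of $V$ (where the leading term $A_n B_n(z)$ dominates by \eqref{2.3} and $|A_n|^{1/n}\to 1$) and the uniform upper bound $\frac{1}{n}\log|P_n|\le o(1)$ on $E$, Azarin's principle of descent upgrades these pointwise statements to $L^1_{\mathrm{loc}}$ convergence on $V$ of $\frac{1}{n}\log|P_n|$ to $g_E(\cdot,\infty)$.

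Taking distributional Laplacians on $V$ then gives $\tau_n\to\mu_E$ vaguely on $V$. The $\tau_n$ are tight probability measures, since the outer asymptotic forces the zeros of $P_n$ to lie in the compact set $\{z:|\Phi(z)|\le 2\}$ for all large $n$ almost surely; and $V$ is a neighborhood of $\mathrm{supp}(\mu_E)=\partial E$ with $\mu_E(V)=1$. Every subsequential weak limit of $\tau_n$ must therefore coincide with $\mu_E$ on $V$ and, having total mass at most $1$ with $\mu_E(V)=1$, must equal $\mu_E$ globally, giving $\tau_n\stackrel{w}{\to}\mu_E$ almost surely. The hard part is the inward analytic extension of the strong asymptotic across the analytic boundary uniformly for all three bases, handled by Schwarz reflection applied to the classical Szeg\H{o} / Bergman-Carleman-Suetin / Faber asymptotics; the secondary delicate point is the non-degeneracy of $Q$, which reduces to $\P(A_0\ne 0)=1$.
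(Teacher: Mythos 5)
Your proposal is correct in its core structure but takes a genuinely different and more labor-intensive route than the paper, and it contains one step that needs repair.

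The paper's proof is far shorter: it verifies condition \eqref{4.7} of Theorem BSS exactly as in Theorem \ref{thm2.2} (using \eqref{2.3} for all three bases), then observes that the geometric decay $\limsup_{n\to\infty}\|B_n\|_K^{1/n}<1$ on compacta $K\subset E^\circ$ forces the random series $f=\sum_{k\ge 0}A_kB_k$ to converge uniformly on compacta of $E^\circ$ to an analytic function that is a.s.\ nontrivial by uniqueness of Szeg\H{o}/Bergman/Faber expansions combined with \eqref{4.4}. That supplies \eqref{4.9} at a single point $z_f$, hence \eqref{4.8}, and Theorem BSS does the rest. Notice that this requires only $n$-th root asymptotics, not the strong ($c(z)\Phi(z)^k$) asymptotics your argument leans on. Your approach redoes the potential-theoretic work of Theorem BSS by hand — pointwise convergence of $\frac1n\log|P_n|$ on a collar, upgrade to $L^1_{\mathrm{loc}}$ by Azarin/Hartogs compactness, distributional Laplacian, tightness. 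That machinery is precisely what Blatt--Saff--Simkani and Grothmann package; citing it saves several pages.

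The step that needs repair is ``$P_n(z)$ is therefore well-approximated by $c(z)Q_n(w)$.'' Writing $B_k=c\,\Phi^k(1+\varepsilon_k)$ with $\varepsilon_k\to 0$ uniformly on the interior part of the collar, you get
\[
P_n(z)=c(z)\,Q_n(\Phi(z)) + c(z)\sum_{k=0}^n A_k\,\Phi(z)^k\,\varepsilon_k(z),
\]
and the second sum converges to some analytic $R(z)$ that is \emph{not} zero in general. So the a.s.\ limit of $P_n$ on $V\cap E^\circ$ is $c\,(Q\circ\Phi)+R$, not $c\,(Q\circ\Phi)$; the set on which $\frac1n\log|P_n|\to 0$ is governed by the zeros of $c\,(Q\circ\Phi)+R$, which need not coincide with $(Q\circ\Phi)^{-1}(0)$. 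Your conclusion still holds, but the nontriviality argument must run through the limit $\sum_{k\ge 0}A_kB_k$ itself: by uniqueness of the expansion in the chosen basis, this vanishes identically only if every $A_k=0$, which is excluded a.s.\ since $\P(A_0=0)=0$ (or since $|A_n|^{1/n}\to 1$). With that correction, and with the observation that no Fubini is needed (the zero set of a fixed nontrivial analytic function is discrete for each $\omega$), the proposal goes through. It does, however, prove less in principle: the reflection-plus-strong-asymptotics argument requires an analytic boundary, which Theorem \ref{thm2.4} happens to assume, whereas the paper's Theorem-BSS route is the one that scales to the weaker hypotheses used elsewhere in Section 2.
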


In a more general setting, we introduce an extra assumption \eqref{2.4} on the constant term $A_0$.

\begin{theorem} \label{thm2.5}
Let $E\subset\C$ be any compact set of positive capacity. If \eqref{2.1}-\eqref{2.3} hold, $A_0$ is independent from $\{A_n\}_{n=1}^\infty,$ and there is $t>1$ such that 
\begin{align} \label{2.4}
\sup_{z\in\C}\, \E\left[(\log^-|A_0-z|)^t\right]<\infty,
\end{align}
then the zero counting measures of $P_n(z) = \sum_{k=0}^n A_k B_k(z)$ converge almost surely to $\mu_E$ as $n\to\infty$.
\end{theorem}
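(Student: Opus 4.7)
The plan is to combine the probabilistic consequences of assumption \eqref{2.4} with potential-theoretic characterizations of $\mu_E$. First, Lemma \ref{lem4.2} and \eqref{2.3} imply that almost surely the leading coefficient $a_n := A_n b_{n,n}$ of $P_n$ satisfies $|a_n|^{1/n} \to 1/\textup{cap}(E)$, and the crude estimate
\[
\|P_n\|_E \le (n+1)\,\max_{k\le n}|A_k|\;\max_{k\le n}\|B_k\|_E
\]
together with $\max_{k\le n}|A_k|^{1/n}\to 1$ a.s.\ yields $\limsup_n \|P_n\|_E^{1/n}\le 1$ a.s. The same ingredients also force tightness of $\{\tau_n\}$.

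The new ingredient exploits the independence of $A_0$ from $\sigma(A_1,A_2,\ldots)$. Fix $z_0 \in \C$. Since $B_0\equiv b_{0,0}$ is a nonzero constant, we can write $P_n(z_0) = b_{0,0}(A_0 - w_n)$ with $w_n := -\sum_{k=1}^n A_k B_k(z_0)/b_{0,0}$ measurable with respect to $\sigma(A_1,A_2,\ldots)$. Conditioning on this $\sigma$-algebra and applying Markov's inequality to $(\log^-|A_0 - w_n|)^t$, assumption \eqref{2.4} gives
\[
\P\bigl(|P_n(z_0)|\le e^{-\eps n}\bigr) \le \frac{C_t}{(\eps n + \log|b_{0,0}|)^t} = O(n^{-t}).
\]
Since $t>1$, the Borel-Cantelli lemma yields $\liminf_n |P_n(z_0)|^{1/n}\ge 1$ a.s.\ for each fixed $z_0$. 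By a countable intersection we obtain an almost sure event on which this holds simultaneously for every $z_0$ in a fixed countable dense set $D\subset\C$.

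On this event, let $\tau$ be any weak-$*$ subsequential limit of $\{\tau_n\}$. From $U^{\tau_n}(z) = -\tfrac{1}{n}\log|P_n(z)|+\tfrac{1}{n}\log|a_n|$ and the maximum principle, the bound $\limsup_n\|P_n\|_E^{1/n}\le 1$ gives $\liminf_n U^{\tau_n}(z) \ge \log(1/\textup{cap}(E))$ for every $z$ in the polynomial convex hull $\textup{Pc}(E)$ (the complement of the unbounded component $\Omega_\infty$ of $\C\setminus E$), and thus $U^\tau \ge \log(1/\textup{cap}(E))$ quasi-everywhere on $\textup{Pc}(E)$ by the lower envelope theorem. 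In the reverse direction, the Borel-Cantelli bound yields $\limsup_n U^{\tau_n}(z_0) \le \log(1/\textup{cap}(E))$ at every $z_0\in D$, so by the principle of descent $U^\tau(z_0)\le \log(1/\textup{cap}(E))$ there, and lower semicontinuity together with density of $D$ extends this to $U^\tau \le \log(1/\textup{cap}(E))$ on all of $\C$. Consequently $U^\tau = U^{\mu_E} = \log(1/\textup{cap}(E))$ q.e.\ on $\textup{Pc}(E)$.

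The main obstacle is the last step: ruling out mass of $\tau$ in $\Omega_\infty$. For this I plan to invoke Fubini's reciprocity
\[
\int U^\tau\,d\mu_E \;=\; \int U^{\mu_E}\,d\tau.
\]
Because $\mu_E$ does not charge polar sets and $U^\tau=\log(1/\textup{cap}(E))$ q.e.\ on $\textup{Pc}(E)\supset\mathrm{supp}(\mu_E)$, the left-hand side equals $\log(1/\textup{cap}(E))$. On the right, $U^{\mu_E}(z) = \log(1/\textup{cap}(E))$ q.e.\ on $\textup{Pc}(E)$ while $U^{\mu_E}(z) = \log(1/\textup{cap}(E))-g_{\Omega_\infty}(z,\infty)$ for $z\in\Omega_\infty$, so matching the two sides forces $\int_{\Omega_\infty} g_{\Omega_\infty}(z,\infty)\,d\tau(z)=0$, hence $\tau(\Omega_\infty)=0$. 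Thus $\tau$ is a probability measure on $\textup{Pc}(E)$ whose potential equals $\log(1/\textup{cap}(E))$ q.e.\ on its support, and Frostman's uniqueness theorem identifies $\tau$ with $\mu_E$. Since every subsequential limit equals $\mu_E$, we conclude $\tau_n\stackrel{w}{\to}\mu_E$ almost surely.
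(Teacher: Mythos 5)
Your probabilistic step is essentially identical to the paper's: you write $P_n(z_0)=b_{0,0}(A_0-w_n)$ with $w_n$ measurable with respect to $\sigma(A_1,A_2,\ldots)$, condition on that $\sigma$-algebra, apply Markov's/Chebyshev's inequality to $(\log^-|A_0-w_n|)^t$ using \eqref{2.4}, and sum $O(n^{-t})$ (with $t>1$) to get the Borel--Cantelli conclusion $\liminf_n|P_n(z_0)|^{1/n}\ge 1$ almost surely. That is exactly the paper's argument, organized slightly differently. The paper only needs this at a single point in each bounded component of $\C\setminus\mathrm{supp}\,\mu_E$, whereas you prove it simultaneously on a countable dense set; both are fine.

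Where you diverge is in how the $\liminf$ bound is converted to $\tau_n\stackrel{w}{\to}\mu_E$. The paper stops at verifying conditions \eqref{4.7} and \eqref{4.9}, and then cites Theorem BSS (Blatt--Saff--Simkani) together with Grothmann's lemma for the unbounded component, treating these as black boxes. You re-derive the potential theory (lower envelope theorem, principle of descent, Fubini reciprocity, Frostman uniqueness). The structure of that re-derivation is right, but it has a real gap: you assert tightness of $\{\tau_n\}$ without proof and then use results (the lower envelope theorem, the reciprocity identity for a probability measure $\tau$) that implicitly require the supports to stay in a fixed compact set, i.e., that no fraction of zeros escapes to infinity. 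That non-escape is not automatic; it is precisely the content of the Grothmann result the paper cites for the unbounded component of $\C\setminus\mathrm{supp}\,\mu_E$. Related to this, your Fubini step $\int U^\tau\,d\mu_E=\int U^{\mu_E}\,d\tau$ only forces $\tau(\Omega_\infty)=0$ cleanly when $\tau$ is a unit measure on a compact set; if $\tau$ could be a sub-probability (mass lost at infinity), the inequality $\int U^{\mu_E}\,d\tau\le\theta\log(1/\mathrm{cap}(E))$ does not rule out $\theta<1$ when $\mathrm{cap}(E)\ge 1$. You should either prove the non-escape directly (a Bernstein--Walsh comparison using $|a_n|^{1/n}\to 1/\mathrm{cap}(E)$ and $\limsup\|P_n\|_E^{1/n}\le 1$ shows only $o(n)$ zeros lie outside any fixed disk containing $E$) or cite Grothmann/BSS as the paper does.
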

Assumption \eqref{2.4} means that the probability measure of $A_0$ cannot be too concentrated at any point $z\in\C.$ In particular, it rules out the possibility that $A_0$ takes any specific value with positive probability, so that $A_0$ cannot be a discrete random variable. On the other hand, if $A_0$ is a continuous random variable satisfying \eqref{2.4}, its density need not be bounded. For example, if the probability measure $\nu$ of $A_0$ is absolutely continuous with respect to the area measure $dA$ and has density $d\nu/dA(w)$ uniformly bounded by $C/|w-z|^s,\ s<2,$ near every $z\in\C,$ then \eqref{2.4} holds.

\medskip
Since we used a sequence of random coefficients $\{A_k\}_{k=0}^{\infty},$ polynomials $\sum_{k=0}^n A_k B_k(z)$ were essentially partial sums of a random series. We now discuss even more general sequences of random polynomials of the form
$$P_n(z) = \sum_{k=0}^n A_{k,n} B_k(z).$$
Here we deal with a triangular array of coefficients $A_{k,n},\ k=0,1,\ldots,n,\ n\in\N,$ that are complex valued random variables. As before, they need not be identically distributed. We denote the distribution function of $|A_{k,n}|$ by $F_{k,n}.$ Assumptions 1 and 2 uniformly imposed on all coefficients $A_{k,n}$ suffice to obtain that
\[
\lim_{n\to\infty} |A_{0,n}|^{1/n} = \lim_{n\to\infty} |A_{n,n}|^{1/n} = 1 \quad\mbox{ a.s.}
\]
by Lemma \ref{lem4.1}. But we need a slightly stronger condition to prove the limit
\[
\lim_{n\to\infty} \max_{0\le k\le n} |A_{k,n}|^{1/n} = 1 \quad\mbox{ a.s.}
\]
Thus we introduce the following assumptions on the triangular array of random coefficients.

\textbf{Assumption 1*} There is $N\in\N$ such that $\{|A_{k,n}|\}_{k=0}^n$ are jointly independent for each $n\ge N.$ Furthermore, there is a function $f:[a,\infty)\to[0,1],\ a>1,$ such that $f(x)\log{x}$ is decreasing, and
\begin{align} \label{2.5}
\int_a^{\infty} f(x)\,\frac{\log{x}}{x}\,dx < \infty \mbox{ and } 1-F_{k,n}(x)\le f(x),\ \forall\, x\in[a,\infty), 
\end{align}
holds for all $k=0,1,\ldots,n,$ and all $n\ge N.$

\textbf{Assumption 2*} There is $N\in\N$ and an increasing function $g:[0,b]\to[0,1],\ 0<b<1,$ such that
\begin{align} \label{2.6}
\int_0^b \frac{g(x)}{x}\,dx < \infty  \mbox{ and } F_{k,n}(x)\le g(x),\ \forall\, x\in[0,b],
\end{align}
holds for all $k=0,1,\ldots,n,$ and all $n\ge N.$

Lemma \ref{lem4.3} in Section 4 gives all necessary limits \eqref{4.11}-\eqref{4.13} that allow to extend Theorems \ref{thm2.1}, \ref{thm2.2} and Corollary \ref{cor2.3} by following similar ideas, but certainly replacing \eqref{2.1} and \eqref{2.2} (Assumptions 1 and 2) with \eqref{2.5} and \eqref{2.6} (Assumptions 1* and 2*). The corresponding analog of Theorem \ref{2.5} also holds if we replace \eqref{2.1} and \eqref{2.2} by \eqref{2.5} and \eqref{2.6}, as well as replace \eqref{2.4} by the condition
\begin{align} \label{2.7}
\limsup_{n\to\infty}\,\sup_{z\in\C} \E\left[(\log^-|A_{0,n}-z|)^t\right]<\infty,
\end{align}
for a fixed $t>1.$ Detailed proofs of these statements may be found in \cite{Pr3}, and we confine ourselves to an outline of the necessary arguments in this paper.

\section{Expected Number of Zeros of Random Polynomials}

Results of this section provide quantitative estimates for the weak convergence of the zero counting measures of random polynomials to the corresponding equilibrium measures. In particular, we study the expected deviation of the normalized counting measure of zeros $\tau_n$ from the equilibrium measure $\mu_E$ on certain sets, which is often referred to as discrepancy between those measures. We again assume that the complex valued random variables $A_k,\ k=0,1,2,\ldots,$ are not necessarily independent nor identically distributed. It is convenient to first discuss the simplest case of the unit circle, which originated in \cite{PS}. A standard way to study the deviation of $\tau_n$ from $\mu_{\T}$ is to consider the discrepancy of these measures in the annular sectors of the form
$$\cA_r(\alpha,\beta)=\{z\in \C :r<|z|<1/r, \ \alpha \leq \text{arg} \ z <\beta \}, \quad 0<r<1.$$
The recent paper of Pritsker and Yeager \cite{PY} contains the following estimate of the discrepancy.

\begin{theorem} \label{thm3.1}
Suppose that the coefficients of $P_n(z)=\sum_{k=0}^n A_k z^k$ are complex random variables that satisfy:
\begin{enumerate}
\item $\E[|A_k|^t]<\infty,\ k=0,\ldots,n,$ for a fixed $t\in(0,1]$
\item $\E[\log|A_0|] > -\infty$ and $\E[\log|A_n|] > -\infty.$
\end{enumerate}
Then we have for all large $n\in \N$ that
\begin{align} \label{3.1}
\E\left[\left|\tau_n(\cA_r(\alpha,\beta))-\frac{\beta-\alpha}{2\pi}\right|\right] \leq C_r \left[\frac{1}{n}\left(\frac{1}{t}\log \sum_{k=0}^n \E[|A_k|^t] - \frac{1}{2}\E[\log|A_0A_n|] \right)\right]^{1/2},
\end{align}
where
$$C_r := \sqrt{\frac{2\pi}{\mathbf{k}}}+\frac{2}{1-r} \quad\mbox{with}\quad \mathbf{k}:=\sum_{k=0}^{\infty}\frac{(-1)^k}{(2k+1)^2}$$
being Catalan's constant.
\end{theorem}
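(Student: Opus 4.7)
The plan is to reduce the estimate to a purely deterministic Erdős--Turán-type discrepancy inequality for sectors of annuli, and then pass to expectations. Specifically, the central ingredient is a bound (in the spirit of Erdős--Turán, Ganelius, Mignotte, and Andrievskii--Blatt, given in the precise form needed here in \cite{PS} and \cite{PY}) stating that for every polynomial $P_n(z)=\sum_{k=0}^n A_k z^k$ of exact degree $n$ with $A_0 A_n\neq 0$ one has
\[
\left|\tau_n(\cA_r(\alpha,\beta)) - \frac{\beta-\alpha}{2\pi}\right| \le C_r\sqrt{\frac{1}{n}\left(\log\|P_n\|_\T - \tfrac{1}{2}\log|A_0 A_n|\right)},
\]
with exactly the constant $C_r$ that appears in \eqref{3.1}. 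I would invoke this as a lemma; hypothesis 2 guarantees $A_0 A_n \neq 0$ almost surely, and $\|P_n\|_\T^2 \ge \sum_{k=0}^n |A_k|^2 \ge 2|A_0 A_n|$ (by Parseval plus AM--GM) shows that the expression under the square root is nonnegative, so the bound makes sense pointwise.

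Once this bound is in hand, I would take expectations. Since $\sqrt{\cdot}$ is concave on $[0,\infty)$, Jensen's inequality gives
\[
\E\!\left[\left|\tau_n(\cA_r(\alpha,\beta)) - \frac{\beta-\alpha}{2\pi}\right|\right] \le C_r\sqrt{\frac{1}{n}\left(\E[\log\|P_n\|_\T] - \tfrac{1}{2}\E[\log|A_0 A_n|]\right)}.
\]
Hypothesis 2 handles the $\E[\log|A_0 A_n|]$ term, so it remains to control $\E[\log\|P_n\|_\T]$. The triangle inequality on $\T$ gives $\|P_n\|_\T\le\sum_{k=0}^n|A_k|$; the elementary subadditivity $(a+b)^t\le a^t + b^t$ for $t\in(0,1]$ iterates to $\bigl(\sum_{k=0}^n|A_k|\bigr)^t\le\sum_{k=0}^n|A_k|^t$, and so
\[
\log\|P_n\|_\T \le \tfrac{1}{t}\log\sum_{k=0}^n|A_k|^t.
\]
A second application of Jensen, now to the concave function $\log$, produces $\E[\log\|P_n\|_\T]\le \tfrac{1}{t}\log\sum_{k=0}^n\E[|A_k|^t]$, which is finite by hypothesis 1. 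Substituting back into the preceding display yields \eqref{3.1}.

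The main obstacle is the deterministic discrepancy inequality itself, in particular the sharp constant $C_r = \sqrt{2\pi/\mathbf{k}} + 2/(1-r)$; the probabilistic part of the argument is essentially just two applications of Jensen's inequality combined with the elementary subadditivity of $x\mapsto x^t$. Reproving the deterministic part from scratch would require a Fourier-analytic argument on $\T$ using the Beurling trigonometric majorant (or a Vaaler-type construction) of the indicator of an arc, which is where Catalan's constant enters. Fortunately this deterministic estimate is already established in the cited references, so the present work reduces to careful expectation bookkeeping.
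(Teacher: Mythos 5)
Your argument is correct and essentially reproduces the paper's proof: a deterministic Erd\H{o}s--Tur\'an-type discrepancy estimate on annular sectors, followed by Jensen for $\sqrt{\cdot}$, then the triangle inequality, the subadditivity $\bigl(\sum|A_k|\bigr)^t\le\sum|A_k|^t$ for $t\in(0,1]$, and Jensen for $\log$ (these last three steps are precisely the paper's Lemma~\ref{lem4.4}). The one place to be careful is the form of the deterministic lemma you invoke. What the paper actually quotes from \cite{PS} is the two-term bound
\[
\left|\tau_n(\cA_r(\alpha,\beta))-\tfrac{\beta-\alpha}{2\pi}\right|
\le \sqrt{\tfrac{2\pi}{\mathbf{k}}}\,\sqrt{y_n}+\tfrac{2}{1-r}\,y_n,
\qquad y_n:=\tfrac1n\log\tfrac{\|P_n\|_\T}{\sqrt{|A_0A_n|}},
\]
and the paper collapses the two terms into $C_r\sqrt{\E[y_n]}$ only \emph{after} taking expectations, which is exactly where the ``for all large $n$'' qualifier in the theorem is consumed (it amounts to requiring $\E[y_n]\le1$). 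The single-term inequality $|\cdot|\le C_r\sqrt{y_n}$ that you state is nevertheless valid pointwise for every $n$: when $y_n\le1$ one has $y_n\le\sqrt{y_n}$, and when $y_n>1$ the trivial bound $|\tau_n(\cA_r)-\tfrac{\beta-\alpha}{2\pi}|\le1\le\sqrt{y_n}$ together with $C_r>\sqrt{2\pi/\mathbf{k}}>1$ finishes it. So your route in fact removes the ``large $n$'' restriction, but you should spell out this short deduction from the two-term form rather than assert that the one-term version appears verbatim in \cite{PS,PY}. Your Parseval/AM--GM remark that $y_n\ge\frac12\log2>0$ is a nice touch confirming the square root is well defined.
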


Introducing uniform bounds, \cite{PY} also provides the rates of convergence for the expected discrepancy as $n\to\infty.$
\begin{corollary} \label{cor3.2}
Let $P_n(z)=\sum_{k=0}^n A_{k,n} z^k,\ n\in\N,$ be a sequence of random polynomials. If
$$M := \sup\{\E[|A_{k,n}|^t]\ \vert \ k=0,\ldots,n,\ n\in\N\} < \infty$$
and
$$L :=  \inf\{\E[\log|A_{k,n}|]\ \vert \ k=0\,\&\,n,\ n\in\N\} > - \infty,$$
then
\begin{align*}
\E\left[\left|\tau_n(\cA_r(\alpha,\beta))-\frac{\beta-\alpha}{2\pi}\right|\right] \leq C_r \left[\frac{1}{n}\left(\frac{\log (n+1)+\log M}{t} - L \right)\right]^{1/2} = O\left(\sqrt{\frac{\log{n}}{n}}\right)
\end{align*}
as $n\to\infty.$
\end{corollary}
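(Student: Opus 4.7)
The corollary is a direct consequence of Theorem \ref{thm3.1}: the plan is simply to apply that estimate to each polynomial in the sequence and then use the uniform constants $M$ and $L$ to replace the $n$-dependent quantities on the right-hand side by expressions that can be controlled explicitly.

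First, I would verify that the hypotheses of Theorem \ref{thm3.1} are satisfied by the $n$-th row $(A_{0,n},\ldots,A_{n,n})$ of the triangular array for every $n\in\N$. The fractional moment condition $\E[|A_{k,n}|^t]<\infty$ is immediate from $\E[|A_{k,n}|^t]\le M$, while $\E[\log|A_{0,n}|]>-\infty$ and $\E[\log|A_{n,n}|]>-\infty$ follow from the lower bound $L$ since that infimum is taken precisely over $k=0$ and $k=n$. Applying Theorem \ref{thm3.1} row by row therefore yields, for all large $n$,
\[
\E\left[\left|\tau_n(\cA_r(\alpha,\beta))-\frac{\beta-\alpha}{2\pi}\right|\right] \le C_r \left[\frac{1}{n}\left(\frac{1}{t}\log\sum_{k=0}^n \E[|A_{k,n}|^t]-\frac{1}{2}\E[\log|A_{0,n}A_{n,n}|]\right)\right]^{1/2}.
\]

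Next, I would bound the two $n$-dependent pieces by uniform counterparts. From $\E[|A_{k,n}|^t]\le M$ we get $\sum_{k=0}^n \E[|A_{k,n}|^t]\le (n+1)M$, so $\frac{1}{t}\log\sum_{k=0}^n \E[|A_{k,n}|^t] \le \frac{\log(n+1)+\log M}{t}$. For the logarithmic term, the decomposition $\E[\log|A_{0,n}A_{n,n}|]=\E[\log|A_{0,n}|]+\E[\log|A_{n,n}|]\ge 2L$ gives $-\frac{1}{2}\E[\log|A_{0,n}A_{n,n}|]\le -L$. Substituting both bounds into the displayed inequality produces the claimed estimate, and since $M$, $L$, $t$ are fixed constants, the bracketed quantity is $O(\log n)$, hence the whole right-hand side is $O(\sqrt{\log n/n})$ as $n\to\infty$.

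The only subtlety worth flagging is confirming that Theorem \ref{thm3.1} is applicable to a triangular array. This is not a genuine obstacle: the theorem statement makes no assumption linking coefficients across different degrees, so invoking it separately for each fixed row $(A_{0,n},\ldots,A_{n,n})$ is legitimate. Consequently the corollary amounts to a straightforward uniform-in-$n$ bookkeeping step layered on top of Theorem \ref{thm3.1}, with no additional probabilistic or potential-theoretic input required.
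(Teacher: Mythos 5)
Your proposal is correct and matches the paper's proof, which simply states that the result follows from substituting the uniform bounds $M$ and $L$ into estimate \eqref{3.1}; you have spelled out the same bookkeeping (sum bound $(n+1)M$, logarithmic bound $-L$) in full. The clarifying remark about applying Theorem \ref{thm3.1} row by row to the triangular array is a reasonable and accurate addition.
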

It is well known from the original work of Erd\H{o}s and Tur\'an \cite{ET} that the order $\sqrt{\log{n}/n}$ is optimal in the deterministic case. The proofs of Theorem \ref{thm3.1} and Corollary \ref{cor3.2} are sketched in Section 4 for convenience of the reader. Papers \cite{PS} and \cite{PY} explain how one can obtain quantitative results about the expected number of zeros of random polynomials in various sets, see Propositions 2.3-2.5 of \cite{PY}. The basic observation here is that the number of zeros of $P_n$ in a set $S\subset\C$ denoted by $N_n(S)$ is equal to $n\tau_n(S),$ and the estimates for $\E[N_n(S)]$ readily follow from Theorem \ref{thm3.1} and Corollary \ref{cor3.2}. 

\bigskip
We now turn to random polynomials spanned by the general bases $B_k(z)=\sum_{j=0}^k b_{j,k}z^j,\ k=0,1,\ldots$, where $b_{j,k}\in\C$ for all $j$ and $k$, and $b_{k,k}\neq 0$ for all $k$. These bases are considered in conjunction with an arbitrary compact set $E$ of positive capacity in the plane, whose equilibrium measure is denoted by $\mu_E.$ It is known that in order to obtain the discrepancy results for the pair $\tau_n$ and $\mu_E$ on compact sets $E\subset\C$, one inevitably needs to restrict the geometric properties of $E$, see Andrievskii and Blatt \cite{AB}. Although assumption \eqref{2.3} is typically sufficient for the discrepancy to converge to $0$ as $n\to\infty,$ we need a different assumption to obtain the rates of convergence as in Corollary \ref{cor3.2}. In fact, many important bases satisfy
\begin{align} \label{3.2}
\|B_k\|_E = O(k^p) \quad \mbox{and} \quad |b_{k,k}|(\text{cap}(E))^k \ge c\, k^{-q}\quad \mbox{as } k\to\infty,
\end{align}
with fixed positive constants $c,p,q$.

Instead of the annular sectors $\cA_r(\alpha,\beta)$, we use the ``generalized sectors" $\cA_r$ defined with help of the Green function and conformal mappings. As in the previous section, we begin with the case when $E$ has empty interior. Specifically, let $E$ be a compact set with one connected component being a Jordan arc $L$ such that the distance from $L$ to $E\setminus L$ is positive. Denote the Green function of $\overline{\C}\setminus E$ with pole at infinity by $g_E(z)$, and denote its harmonic conjugate by $\tilde{g}_E(z).$ One can find $b_L>0$ such that $\Phi(z)=\exp[b_L(g_E(z)+i\tilde{g}_E(z))]$ defines a conformal bijection between an annular region $U_L$ with inner boundary $L$ and an annulus $1<|w|<R,\ R>1.$ The mapping $\Phi$ extends to $L$ with values in $\T$ by a standard argument. Given any subarc $J\subset L$ and $r\in(1,R)$, we set 
\[
\cA_r=\cA_r(J)=\{z\in \overline{U}_L: 1 \le |\Phi(z)| \le r \mbox{ and }\Phi(z)/|\Phi(z)| \in \Phi(J)\}. 
\]
In other words, $\cA_r$ is a curvilinear strip around $J$ that is bounded by the level curve $|\Phi(z)|=r.$ More details of this construction may be found in Chapter 2 of \cite{AB}. 

A smooth Jordan curve is said to be Dini-smooth if the angle between its tangent line and positive real axis is Dini-continuous as a function of arclength parameter, i.e., the modulus of continuity of this function satisfies the Dini condition \cite[p. 32]{AB}. A Dini-smooth arc is defined as a proper subarc of a Dini-smooth curve. Further, a Dini-smooth domain is a domain bounded by a Dini-smooth curve.

We use general discrepancy results for deterministic polynomials obtained by Andrievskii and Blatt \cite[Chapter 2]{AB} to study the expected deviation of zero counting measures for random polynomials from the limiting equilibrium measures.

\begin{theorem} \label{thm3.3}
Suppose that $E$ is a compact set with Dini-smooth arc $L\subset E$ such that the distance from $L$ to $E\setminus L$ is positive. For $P_n(z)=\sum_{k=0}^n A_k B_k(z),$ let $\{A_k\}_{k=0}^n$ be random variables satisfying $\E[|A_k|^t]<\infty,\ k=0,\ldots,n,$ for a fixed $t\in(0,1],$ and $\E[\log|A_n|] > -\infty$. Then we have for all large $n\in \N$ that
\begin{align} \label{3.3}
&\E\left[\left|(\tau_n-\mu_E)(\cA_r)\right|\right] \\ \nonumber &\le C \left[\frac{1}{n}\left(\frac{1}{t}\log \left(\sum_{k=0}^n \E[|A_k|^t]\right) + \log \frac{\max_{0 \le k \le n} \|B_k\|_{\infty}}{|b_{n,n}|(\textup{cap}(E))^n} - \E[\log |A_n|]\right)\right]^{1/2},
\end{align}
where $C>0$ depends only on $E$ and $r$. Furthermore, if $E$ is a finite union of closed intervals on the real line, then \eqref{3.3} holds true with $C=8$ and $\cA_r$ being the union of vertical strips $\{z\in\C: \Re(z)\in E\}.$
\end{theorem}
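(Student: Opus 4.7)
The plan is to reduce the expected discrepancy to the deterministic discrepancy theory of Andrievskii and Blatt \cite[Chapter 2]{AB}, which for a Dini-smooth arc $L\subset E$ at positive distance from $E\setminus L$ supplies an Erd\H{o}s--Tur\'an-type inequality
\[
|(\tau_n-\mu_E)(\cA_r)|\le c(E,r)\left(\frac{1}{n}\log\frac{\|P_n\|_E}{|a_n|(\textup{cap}(E))^n}\right)^{1/2},
\]
valid for any polynomial $P_n$ of degree $n$ with leading coefficient $a_n\neq 0$. Since $\deg B_k=k$ and $b_{k,k}\neq 0$, only $B_n$ contributes to the degree-$n$ term of $\sum_{k=0}^n A_k B_k$, so $a_n=A_n b_{n,n}$. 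Everything then reduces to controlling $\|P_n\|_E$ in terms of the random coefficients.

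For this I would exploit the subadditivity $|u+v|^t\le |u|^t+|v|^t$ valid for $t\in(0,1]$, which gives
\[
\|P_n\|_E^t\le \sum_{k=0}^n |A_k|^t\|B_k\|_E^t\le \bigl(\max_{0\le k\le n}\|B_k\|_E\bigr)^t \sum_{k=0}^n |A_k|^t.
\]
Taking logarithms and substituting into the deterministic inequality yields the pointwise (random) bound
\[
|(\tau_n-\mu_E)(\cA_r)|\le c\left(\frac{1}{n}\left[\frac{1}{t}\log\sum_{k=0}^n|A_k|^t+\log\frac{\max_k\|B_k\|_E}{|b_{n,n}|(\textup{cap}(E))^n}-\log|A_n|\right]\right)^{1/2}.
\]

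The argument then closes by taking expectations and applying Jensen's inequality twice: concavity of $\sqrt{\cdot}$ moves $\E$ inside the square root, and concavity of $\log$ gives $\E[\log\sum_k|A_k|^t]\le\log\sum_k\E[|A_k|^t]$. The hypothesis $\E[\log|A_n|]>-\infty$ guarantees that the remaining expectation is finite, and the resulting inequality is precisely \eqref{3.3}. For the sharpened statement with $C=8$ when $E\subset\R$ is a finite union of intervals and $\cA_r=\{\Re(z)\in E\}$, I would replace the generic Andrievskii--Blatt bound by the classical Erd\H{o}s--Tur\'an-type discrepancy inequality for intervals on the real line, which carries the explicit constant $8$; the probabilistic reduction above is otherwise unchanged.

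The main obstacle is locating the deterministic discrepancy inequality from \cite{AB} whose hypotheses exactly match our geometric setup. The Dini-smoothness of $L$ together with the positive separation from $E\setminus L$ is what is needed to ensure that the conformal map $\Phi$ used to define $\cA_r$ behaves well up to $L$ and that the potential-theoretic machinery of \cite{AB} applies to the curvilinear strip. Once that inequality is in hand, the subadditivity/Jensen pattern (the same one used to derive Theorem \ref{thm3.1}) produces the stated bound mechanically.
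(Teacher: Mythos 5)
Your proposal is correct and follows essentially the same route as the paper: the deterministic discrepancy estimate from Andrievskii--Blatt (Theorem 4.2 in Chapter 2 for the general case, Theorem 5.1 for the interval case with $C=8$), a bound on $\|P_n\|_E$ via subadditivity of $x\mapsto x^t$, and Jensen's inequality applied twice. The only cosmetic difference is that you apply the $t$-th-power subadditivity directly to $\|P_n\|_E^t$, whereas the paper first bounds $\|P_n\|_E\le\max_k\|B_k\|_E\sum_k|A_k|$ and then invokes its Lemma 4.4, which encapsulates the same subadditivity-plus-Jensen step.
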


\begin{corollary} \label{cor3.4}
Let $P_n(z)=\sum_{k=0}^n A_{k,n} B_k(z),\ n\in\N,$ be a sequence of random polynomials, and let $E$ satisfy the assumptions of Theorem \ref{thm3.3}. Suppose that for $t\in(0,1]$ we have
\begin{align} \label{3.4}
\limsup_{n\to\infty} \max_{k=0,\ldots,n} \E[|A_{k,n}|^t] < \infty
\end{align}
and
\begin{align} \label{3.5}
\liminf_{n\to\infty} \E[\log|A_{n,n}|] > - \infty.
\end{align}
If the basis polynomials $B_k$ satisfy \eqref{2.3}, then 
\begin{align} \label{3.6}
\lim_{n\to\infty} \E\left[\left|(\tau_n-\mu_E)(\cA_r)\right|\right] = 0.
\end{align}
Furthermore, if \eqref{3.2} is satisfied, then
\begin{align} \label{3.7}
\E\left[\left|(\tau_n-\mu_E)(\cA_r)\right|\right] = O\left(\sqrt{\frac{\log{n}}{n}}\right) \quad \mbox{as }n\to\infty.
\end{align}
\end{corollary}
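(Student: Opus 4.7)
The plan is to deduce Corollary \ref{cor3.4} directly from Theorem \ref{thm3.3} by bounding each of the three terms inside the square root in \eqref{3.3}, now applied to the triangular array $\{A_{k,n}\}$. Thus for all sufficiently large $n$,
\begin{align*}
\E\left[\left|(\tau_n-\mu_E)(\cA_r)\right|\right] \le C\left[\frac{1}{n}\left(S_n + R_n - \E[\log|A_{n,n}|]\right)\right]^{1/2},
\end{align*}
where I abbreviate
\begin{align*}
S_n := \frac{1}{t}\log\sum_{k=0}^n \E[|A_{k,n}|^t] \quad\text{and}\quad R_n := \log\frac{\max_{0\le k\le n}\|B_k\|_E}{|b_{n,n}|(\textup{cap}(E))^n}.
\end{align*}

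To prove \eqref{3.6}, I would estimate each piece separately. By \eqref{3.4} there exist $M<\infty$ and $N_0$ such that $\E[|A_{k,n}|^t]\le M$ whenever $0\le k\le n$ and $n\ge N_0$; consequently $S_n\le t^{-1}\log((n+1)M) = O(\log n)$. Hypothesis \eqref{3.5} gives that $-\E[\log|A_{n,n}|]$ is bounded above for all large $n$. For $R_n$, the first condition in \eqref{2.3} implies that for every $\epsilon>0$ one has $\|B_k\|_E\le(1+\epsilon)^k$ once $k$ exceeds some threshold $K_\epsilon$; combined with the trivial boundedness of the finitely many initial $\|B_k\|_E$, this forces $n^{-1}\log\max_{0\le k\le n}\|B_k\|_E \le \log(1+\epsilon)+o(1)$, and letting $\epsilon\to 0^+$ yields $n^{-1}\log\max_{0\le k\le n}\|B_k\|_E\to 0$. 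The second condition in \eqref{2.3} similarly gives $n^{-1}\log(|b_{n,n}|(\textup{cap}(E))^n)\to 0$. Combining, $R_n=o(n)$, so the bracketed quantity is $o(1)$ and \eqref{3.6} follows.

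For \eqref{3.7}, assumption \eqref{3.2} upgrades the $o(n)$ bound on $R_n$ to $R_n=O(\log n)$, since $\max_{0\le k\le n}\|B_k\|_E = O(n^p)$ while $|b_{n,n}|(\textup{cap}(E))^n\ge c\,n^{-q}$. Together with $S_n=O(\log n)$ and the boundedness of $-\E[\log|A_{n,n}|]$, the bracketed quantity is $O(\log n/n)$, which yields the claimed rate $O(\sqrt{\log n/n})$.

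The only non-routine point is the purely deterministic estimate $n^{-1}\log\max_{0\le k\le n}\|B_k\|_E\to 0$ extracted from the limsup hypothesis in \eqref{2.3}; this step is delicate because the limsup gives no uniform control over individual $\|B_k\|_E^{1/k}$ for small $k$, so one must handle the initial terms separately as sketched above. Everything else is a direct substitution into Theorem \ref{thm3.3}, and no new probabilistic or potential-theoretic machinery is needed.
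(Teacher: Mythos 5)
Your proof is correct and follows essentially the same route as the paper's: substitute the triangular-array coefficients into the bound of Theorem \ref{thm3.3}, then bound the three terms under the square root separately, with \eqref{3.4} giving the $O(\log n/n)$ bound, \eqref{3.5} giving $O(1/n)$, and \eqref{2.3} (respectively \eqref{3.2}) giving $o(1)$ (respectively $O(\log n/n)$) for the term involving $\max_{0\le k\le n}\|B_k\|_E$ and $|b_{n,n}|(\textup{cap}(E))^n$. The deterministic step you flag as delicate --- deducing $n^{-1}\log\max_{0\le k\le n}\|B_k\|_E\to 0$ from $\limsup_k\|B_k\|_E^{1/k}\le 1$ by treating the finitely many initial terms separately --- is exactly the argument the paper relies on (it is the same device used in the proof of Lemma \ref{lem4.2}), so you have simply made explicit what the paper asserts tersely.
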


The conclusion of Corollary \ref{cor3.4} stated in \eqref{3.7} holds for the bases of orthogonal polynomials with respect to the weighted arclength measure on $E$, and of Faber polynomials when $E$ is a single arc. One only needs to verify that \eqref{3.2} is satisfied in those cases. 

\begin{corollary} \label{cor3.5}
Assume that conditions \eqref{3.4}-\eqref{3.5} hold for the coefficients.\\
\textup{(i)} Suppose that $E$ is a finite union of disjoint Dini-smooth arcs. If the basis polynomials $B_k$ are orthonormal with respect to a positive Borel measure $\mu$  such that $d\mu(s)=w(s)\,ds$, where $w(s)\ge c > 0$ for almost every point on $E$, then \eqref{3.2} is satisfied, and \eqref{3.7} holds true.\\
\textup{(ii)} Suppose that $E$ is an arbitrary Jordan arc. If the basis polynomials $B_k$ are
the Faber polynomials of $E$, then \eqref{3.2} holds true. Hence \eqref{3.7} is valid provided  $E$ is a Dini-smooth arc.
\end{corollary}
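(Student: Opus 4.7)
The plan is to reduce both parts of Corollary \ref{cor3.5} to Corollary \ref{cor3.4}: once assumption \eqref{3.2} is verified for the basis under consideration, the rate \eqref{3.7} follows directly from the hypotheses \eqref{3.4}--\eqref{3.5} already assumed on the coefficients. So the entire task is to verify \eqref{3.2} in the two cases.

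For part (i), let $B_k = p_k = \gamma_k z^k + \cdots$ denote the orthonormal polynomials for $d\mu = w\,ds$ on a finite union $E$ of disjoint Dini-smooth arcs, with $w \ge c > 0$ a.e. To obtain the lower bound on $|b_{k,k}|(\textup{cap}(E))^k = \gamma_k(\textup{cap}(E))^k$, I would use the extremal characterization
\[
\gamma_n^{-2}\;=\;\min_{Q\text{ monic, }\deg Q = n}\int_E |Q|^2\,w\,ds\;\le\;\mu(E)\,\|T_n^*\|_E^2,
\]
where $T_n^*$ is the monic Chebyshev polynomial of $E$. Classical estimates for Chebyshev polynomials on finite unions of Dini-smooth arcs (see Andrievskii--Blatt \cite{AB}) give $\|T_n^*\|_E \le C n^{\alpha}(\textup{cap}(E))^n$, and hence $\gamma_n(\textup{cap}(E))^n \ge c'\, n^{-\alpha}$. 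For the sup-norm bound, the condition $w \ge c$ combined with orthonormality yields $\|p_n\|_{L^2(E,ds)} \le c^{-1/2}$, and a Nikolskii-type inequality on Dini-smooth arcs (pulled back through the local conformal map) then gives $\|p_n\|_E \le C\, n^{\beta}$. Both halves of \eqref{3.2} are in place, and Corollary \ref{cor3.4} delivers \eqref{3.7}.

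For part (ii), let $B_k = F_k$ be the Faber polynomials of the Jordan arc $E$. By construction (see Suetin \cite{Su}, Chapter 2), if $\Psi$ is the normalized conformal map of $\{|w|>1\}$ onto $\overline{\C}\setminus E$ with Laurent expansion $\Psi(w)=\textup{cap}(E)\,w+a_0+a_1/w+\cdots$, then $F_n$ is the polynomial part of $\Phi^n$ for $\Phi=\Psi^{-1}$, and its leading coefficient equals exactly $1/\textup{cap}(E)^n$. Therefore $|b_{n,n}|(\textup{cap}(E))^n \equiv 1$, and the leading-coefficient half of \eqref{3.2} is trivial. For the upper bound $\|F_n\|_E = O(n^p)$ on an arbitrary Jordan arc, I would invoke the classical integral representation of $F_n$ as a contour integral over a level curve $|\Phi|=\rho>1$ and let $\rho\to 1^+$; the resulting bound is polynomial in $n$ for a Jordan arc and becomes sharp when $E$ is Dini-smooth. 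With \eqref{3.2} established and the Dini-smooth hypothesis in place, Corollary \ref{cor3.4} applies and yields \eqref{3.7}.

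The principal technical obstacle is the polynomial upper bound $\|F_n\|_E = O(n^p)$ for a general Jordan arc, since $\Phi$ may have irregular boundary behavior at the endpoints; I would import this estimate directly from Suetin \cite{Su} rather than re-derive it. The Nikolskii-type inequality and the Chebyshev estimate used in part (i) are similarly standard facts recorded in \cite{AB}. Apart from these citations, the remainder of the argument is a straightforward invocation of Corollary \ref{cor3.4}.
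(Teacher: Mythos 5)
Your overall plan coincides with the paper's: verify \eqref{3.2} for each basis and then invoke Corollary \ref{cor3.4}. In part (i) the argument is essentially the paper's argument. The extremal characterization $|b_{n,n}|^{-2}=\min_{Q\text{ monic}}\int_E|Q|^2\,d\mu$ is exactly what the paper uses, and your test polynomial is the monic Chebyshev polynomial $T_n^*$ (the paper phrases it as ``a monic polynomial $Q_n$ with $\|Q_n\|_E\le C_1(\textup{cap}(E))^n$'', citing Widom, Totik and Andrievskii; this is a bounded Widom factor, so your weaker $O(n^\alpha)$ claim is also true and suffices for \eqref{3.2}). Your Nikolskii step matches the paper's use of Theorem 1.1 of \cite{Pr1}, which gives $\|B_n\|_E\le C n\left(\int_E|B_n|^2\,ds\right)^{1/2}$, i.e.\ $\beta=1$ in your notation.

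The one place where your sketch diverges from the paper and is also the weakest link is the sup-norm bound $\|F_n\|_E=O(n^p)$ for Faber polynomials of an arbitrary Jordan arc in part (ii). The contour-integral-over-level-curves argument you outline, letting $\rho\to1^+$, needs boundary regularity of the exterior map $\Psi$ to control the approach to the arc, and for a general Jordan arc (no smoothness assumed in that half of the claim) this does not obviously give a polynomial bound. The paper instead cites the K\"ovari--Pommerenke theorem \cite{KP}, which proves $\|F_n\|_E=O(n^\alpha)$ with $\alpha<1/2$ for \emph{any} compact connected set via area (Bieberbach-type) methods, not contour estimates. You correctly flag the issue and propose to import the estimate from a reference; that is fine, but the right citation is \cite{KP} (as the paper already used in the proof of Corollary \ref{cor2.3}(ii)), not a contour-integral derivation. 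With that citation substituted, your proof is complete and matches the paper's.
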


\medskip
We also give corresponding results for smooth domains (or closed curves). Suppose that $E$ is a compact set whose connected component $S$ is a closed Jordan domain such that $\textup{dist}(S,E\setminus S)>0.$ We define the ``generalized sector" $\cA_r$ by using the conformal mapping $\Phi$ from the annular region $U_S$ with inner boundary $\partial S$ to an annulus $1<|w|<R,\ R>1,$ constructed in the same way as before Theorem \ref{thm3.3}. In addition, we introduce a conformal mapping $\phi$ from the interior Jordan domain $G$ of $S$ onto the unit disk $\D$ such that $\phi(z_0)=0$ for a point $z_0\in G.$ It is known that both mappings $\Phi$ and $\phi$ extend continuously to $\partial S$, being bijections between $\partial S$ and $\T.$ For any subarc $J\subset \partial S$ and $r\in(1,r_0)$, we define 
\begin{align*}
\cA_r=\cA_r(J) &= \{z\in \overline{U}_S: 1 \le |\Phi(z)| \le r \mbox{ and }\Phi(z)/|\Phi(z)| \in \Phi(J)\} \\ &\cup \{z\in \overline{G}: 1/r \le |\phi(z)| \le 1 \mbox{ and }\phi(z)/|\phi(z)| \in \phi(J)\}. 
\end{align*}
Again, $\cA_r$ may be described as a curvilinear strip around $J$ that is bounded by the level curves $|\Phi(z)|=r$ and $|\phi(z)|=1/r,\ r>1.$

\begin{theorem} \label{thm3.6}
Suppose that $E$ is a compact set whose connected component $S$ is a closed Dini-smooth domain such that $\textup{dist}(S,E\setminus S)>0$, with an interior point $w\in S^\circ$. For $P_n(z)=\sum_{k=0}^n A_k B_k(z),$ let $\{A_k\}_{k=0}^n$ satisfy $\E[|A_k|^t]<\infty,\ k=0,\ldots,n,$ for a fixed $t\in(0,1]$. If $\E[\log|A_n P_n(w)|] > -\infty$ then we have for all large $n\in \N$ that
\begin{align} \label{3.8}
&\E\left[\left|(\tau_n-\mu_E)(\cA_r)\right|\right] \\ \nonumber &\le C \left[\frac{1}{n}\left(\frac{2}{t}\log \left(\sum_{k=0}^n \E[|A_k|^t]\right) + \log \frac{\max_{0 \le k \le n} \|B_k\|_E^2}{|b_{n,n}|(\textup{cap}(E))^n} - \E[\log|A_n P_n(w)|] \right)\right]^{1/2},
\end{align}
where $C>0$ depends only on $E$ and $r$. 

In particular, if $\E[\log|A_n|] > -\infty$, $A_0$ is independent from $A_1,\ldots,A_n,$ and $\E[\log|A_0+z|] \ge L > -\infty$ for all $z\in\C,$ then
\begin{align} \label{3.9}
\E[\log|A_n P_n(w)|] \ge \log|b_{0,0}| + \E[\log|A_n|] + L > -\infty,
\end{align}
and \eqref{3.8} holds.
\end{theorem}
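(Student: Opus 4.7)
The plan is to combine a deterministic discrepancy inequality at an interior point with standard probabilistic estimates. Specifically, a version of the Andrievskii--Blatt machinery of \cite[Chapter~2]{AB} applied to a compact set $E$ one of whose components $S$ is a Dini-smooth domain, with $\textup{dist}(S,E\setminus S)>0$ and $w\in S^\circ$, yields for every polynomial $p$ of degree $n$ with leading coefficient $\alpha_n$ and $p(w)\neq 0$ a bound of the form
\[
\bigl|(\tau_n-\mu_E)(\cA_r)\bigr|\le C\left(\frac{1}{n}\log\frac{\|p\|_E^2}{|\alpha_n|(\textup{cap}(E))^n\,|p(w)|}\right)^{1/2},
\]
where $C$ depends only on $E$ and $r$. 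The argument of the logarithm is $\ge 1$ because $|\alpha_n|(\textup{cap}(E))^n\le \|p\|_E$ (Chebyshev-type inequality) and $|p(w)|\le \|p\|_E$ since $w\in E$. I would apply this realization-by-realization to $p=P_n$, whose leading coefficient is $A_n b_{n,n}$.

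Taking expectations and using the concavity of $\sqrt{\,\cdot\,}$ (Jensen: $\E[\sqrt{X}]\le \sqrt{\E[X]}$) reduces the task to bounding from above
\[
2\,\E[\log\|P_n\|_E]-\log\bigl(|b_{n,n}|(\textup{cap}(E))^n\bigr)-\E[\log|A_n P_n(w)|].
\]
For the sup-norm I use $\|P_n\|_E\le (\max_k\|B_k\|_E)\sum_{k=0}^n|A_k|$ together with the elementary $t$-subadditivity $\sum_k|A_k|\le (\sum_k|A_k|^t)^{1/t}$ valid for $t\in(0,1]$, giving
\[
\log\|P_n\|_E\le \log\max_{0\le k\le n}\|B_k\|_E+\frac{1}{t}\log\sum_{k=0}^n|A_k|^t.
\]
A second application of Jensen's inequality (concavity of $\log$) yields $\E\bigl[\log\sum_k|A_k|^t\bigr]\le\log\sum_k\E[|A_k|^t]$. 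Substituting these bounds and regrouping produces exactly the right-hand side of \eqref{3.8}.

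For the second assertion, since $B_0\equiv b_{0,0}$ one can write
\[
P_n(w)=b_{0,0}(A_0+Y),\qquad Y:=\frac{1}{b_{0,0}}\sum_{k=1}^n A_k B_k(w),
\]
where $Y$ is a function of $A_1,\dots,A_n$ only. By the independence of $A_0$ from $(A_1,\dots,A_n)$, conditioning on $Y$ gives $\E[\log|A_0+Y|\mid Y]=\E[\log|A_0+z|]\big|_{z=Y}\ge L$ almost surely, and hence $\E[\log|P_n(w)|]\ge\log|b_{0,0}|+L$. The assumption $\E[|A_n|^t]<\infty$ forces $\E[\log^+|A_n|]<\infty$ via the elementary inequality $\log^+ x\le x^t/t$, and $\E[\log|A_n|]>-\infty$ is given, so $\E[\log|A_n|]$ is finite and the additivity $\E[\log|A_n P_n(w)|]=\E[\log|A_n|]+\E[\log|P_n(w)|]$ is justified, yielding \eqref{3.9}. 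Substituting \eqref{3.9} into \eqref{3.8} gives the final conclusion under the simpler hypotheses.

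The main obstacle is isolating the precise deterministic discrepancy inequality with an interior-point correction for a compact set $E$ whose Dini-smooth component $S$ is only one of possibly several components. One must verify that the arguments in \cite[Chapter~2]{AB}, typically formulated for a single Dini-smooth domain, extend by exploiting $\textup{dist}(S,E\setminus S)>0$ to localize the Green-function and conformal-mapping estimates near $\partial S$ and near the reference point $w\in S^\circ$, so that the factor $|p(w)|^{-1}$ correctly accounts for zeros of $P_n$ inside $S$. A secondary technical concern is ensuring $P_n(w)\neq 0$ almost surely and that $\log|P_n(w)|$ is integrable, both of which follow from the standing hypothesis $\E[\log|A_n P_n(w)|]>-\infty$ together with $\E[\log^+|A_n|]<\infty$.
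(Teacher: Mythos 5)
Your proof is correct and follows essentially the same route as the paper: the deterministic discrepancy estimate you anticipate is Theorem 4.5 of Chapter 2 of Andrievskii--Blatt (the paper's reference~\cite{AB}, p.~85), which gives exactly the bound with the two terms $\log(\|P_n\|_E/(|A_n b_{n,n}|(\textup{cap}(E))^n))$ and $\log(\|P_n\|_E/|P_n(w)|)$ that you combined under a single logarithm. Your subsequent use of Jensen, the $t$-subadditivity bound on $\log\|P_n\|_E$ (which is the paper's Lemma~\ref{lem4.4}), and the conditioning-on-$(A_1,\ldots,A_n)$ argument for \eqref{3.9} all match the paper's proof; you are in fact slightly more careful than the paper in justifying integrability of $\log|A_n|$ and the additive splitting of the expectation.
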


If $\nu$ is the probability measure of $A_0$, then the assumption $\E[\log|A_0+z|] \ge L > -\infty$ for all $z\in\C$ may be interpreted in terms of the logarithmic potential of $\nu$ as $U^{\nu}(z) = - \int \log|t-z|\,d\nu(t) \le - L < \infty$ for all $z\in\C.$ Measures with uniformly bounded above potentials are well understood in potential theory, and they represent a wide class that do not have large local concentration of mass, e.g., they cannot have point masses. 

We next state the analog of Corollary \ref{cor3.4}.

\begin{corollary} \label{cor3.7}
Let $P_n(z)=\sum_{k=0}^n A_{k,n} B_k(z),\ n\in\N,$ be a sequence of random polynomials, and let $E$ satisfy the conditions of Theorem \ref{thm3.6}. Suppose that assumptions \eqref{3.4}, \eqref{3.5} and
\begin{align} \label{3.10}
\liminf_{n\to\infty} \inf_{z\in\C} \E[\log|A_{0,n}+z|] > - \infty
\end{align}
are satisfied for the coefficients, and that $A_{0,n}$ is independent from $\{A_{k,n}\}_{k=1}^n$ for all large $n$. If the basis polynomials $B_k$ satisfy \eqref{3.2}, then \eqref{3.7} holds true.
\end{corollary}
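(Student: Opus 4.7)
The plan is to apply Theorem \ref{thm3.6} directly to each $P_n$ in the sequence, and to verify that the four quantities appearing inside the bracket in \eqref{3.8} are each $O(\log n)$ uniformly in $n$. The whole task is really one of bookkeeping: the hypotheses of the corollary have been engineered so that each ingredient required by Theorem \ref{thm3.6} is present with a uniform constant, and the square root in \eqref{3.8} is what produces the $\sqrt{\log n/n}$ rate.

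First I would fix $n$ large enough that all the qualitative hypotheses hold: by \eqref{3.4} there is $M<\infty$ with $\E[|A_{k,n}|^t]\le M$ for $0\le k\le n$; by \eqref{3.5} there is $L'\in\R$ with $\E[\log|A_{n,n}|]\ge L'$; by \eqref{3.10} there is $L\in\R$ with $\inf_{z\in\C}\E[\log|A_{0,n}+z|]\ge L$; and $A_{0,n}$ is independent from $\{A_{k,n}\}_{k=1}^n$. These are precisely the hypotheses of the ``in particular'' clause of Theorem \ref{thm3.6} applied to $P_n$, so the estimate \eqref{3.9} gives
\[
\E[\log|A_{n,n}P_n(w)|]\ge \log|b_{0,0}|+\E[\log|A_{n,n}|]+L\ge \log|b_{0,0}|+L'+L=:L''>-\infty,
\]
a lower bound \emph{independent of $n$}. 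Consequently the fourth term $-\E[\log|A_{n,n}P_n(w)|]$ in the bracket of \eqref{3.8} is bounded by $-L''=O(1)$.

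Second, I would estimate the remaining three terms using \eqref{3.2} and \eqref{3.4}. From $\E[|A_{k,n}|^t]\le M$ we obtain $\sum_{k=0}^n\E[|A_{k,n}|^t]\le (n+1)M$, so the first term is at most $\tfrac{2}{t}\log((n+1)M)=O(\log n)$. From $\|B_k\|_E=O(k^p)$ we get $\max_{0\le k\le n}\|B_k\|_E^2=O(n^{2p})$, and from $|b_{n,n}|(\textup{cap}(E))^n\ge c\,n^{-q}$ we get $-\log(|b_{n,n}|(\textup{cap}(E))^n)\le q\log n -\log c$. Hence the second log-quotient term is also $O(\log n)$. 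Collecting the four contributions, the bracket in \eqref{3.8} is $O(\log n)$; dividing by $n$ and taking the square root yields \eqref{3.7}.

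The one step that could be called ``the hard part''—though it is really already done in Theorem \ref{thm3.6}—is controlling $-\E[\log|A_{n,n}P_n(w)|]$ uniformly in $n$. Because $w\in S^\circ$, the value $P_n(w)$ could a priori be arbitrarily small on a set of positive probability, and without further structure its expected logarithm can be $-\infty$. The remedy is exactly the combination of the independence of $A_{0,n}$ from the other coefficients with the uniform logarithmic-potential bound \eqref{3.10}, which together convert $\E[\log|A_{n,n}P_n(w)|]$ into $\E[\log|A_{n,n}|]+\E[\log|A_{0,n}+z|]_{z=P_n(w)/b_{0,0}-\cdots}+\log|b_{0,0}|$ via conditioning, and this is what the ``in particular'' clause of Theorem \ref{thm3.6} packages as inequality \eqref{3.9}. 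Once that single uniform-in-$n$ bound $L''$ is secured, the rest of the corollary is a mechanical substitution into \eqref{3.8}.
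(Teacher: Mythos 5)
Your proof is correct and takes essentially the same route as the paper: apply the discrepancy estimate \eqref{3.8} from Theorem \ref{thm3.6}, bound the first three bracketed terms by $O(\log n)$ using \eqref{3.4} and \eqref{3.2}, and secure a uniform-in-$n$ lower bound on $\E[\log|A_{n,n}P_n(w)|]$ via the independence of $A_{0,n}$ together with \eqref{3.5} and \eqref{3.10}. The only cosmetic difference is that the paper splits $-\E[\log|A_{n,n}P_n(w)|]$ into $-\E[\log|A_{n,n}|]-\E[\log|P_n(w)|]$ and bounds each piece directly, whereas you invoke the packaged inequality \eqref{3.9}; the underlying estimate is identical.
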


We give examples of typical bases satisfying \eqref{3.2} below.

\begin{corollary} \label{cor3.8}
Assume that conditions \eqref{3.4}, \eqref{3.5} and \eqref{3.10} hold for the coefficients, and that $A_{0,n}$ is independent from $\{A_{k,n}\}_{k=1}^n$ for all large $n$.\\
\textup{(i)} Suppose that $E$ is a finite union of mutually exterior closed Dini-smooth domains. If the basis polynomials $B_k$ are orthonormal with respect to a positive Borel measure $\mu$ supported on $\partial E$ such that $d\mu(s)=w(s)\,ds$, where $w(s)\ge c > 0$ for almost every point of $E$ in $ds$-sense, then \eqref{3.2} is satisfied and \eqref{3.7} holds true.\\
\textup{(ii)} Suppose that $E$ is the closure of an arbitrary Jordan domain. If the basis polynomials $B_k$ are the Faber polynomials of $E$, then \eqref{3.2} holds true. Hence \eqref{3.7} is valid provided $\partial E$ is a Dini-smooth curve.\\
\textup{(iii)} Suppose that $E$ is a finite union of mutually exterior closed Dini-smooth domains. If the basis polynomials $B_k$ are orthonormal with respect to $d\mu(z)=w(z)\,dA(z),$ where $dA$ is the area measure on $E$ and $w(z)\ge c>0$ a.e. in $dA$-sense, then \eqref{3.2} is satisfied and \eqref{3.7} holds true.\\
\end{corollary}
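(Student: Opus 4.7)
The plan is structurally simple: each of the three parts reduces to verifying the basis estimate \eqref{3.2}, after which Corollary \ref{cor3.7} immediately yields the discrepancy rate \eqref{3.7}. All probabilistic hypotheses (conditions \eqref{3.4}, \eqref{3.5}, \eqref{3.10}, and the independence of $A_{0,n}$ from $\{A_{k,n}\}_{k=1}^n$) are assumed from the outset, so the genuinely substantive work is deterministic: for each of the three bases one must exhibit an upper bound $\|B_k\|_E = O(k^p)$ and a matching lower bound $|b_{k,k}|(\textup{cap}(E))^k \ge c\,k^{-q}$.

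For the Szeg\H{o}-type case (i), I would obtain the lower bound from the classical fact that $b_{k,k}(\textup{cap}(E))^k$ converges to a positive constant whenever $\partial E$ is smooth enough and the weight $w$ is bounded away from zero (Szeg\H{o}'s theorem, extended to the multiply-connected setting of a finite union of mutually exterior domains by Widom's techniques); this gives $q = 0$. The uniform bound $\|B_k\|_E = O(1)$ then follows from the maximum principle applied inside each component, together with the strong boundary asymptotics which, on a Dini-smooth curve with $w \ge c > 0$, express $B_k$ on $\partial E$ in terms of a bounded Szeg\H{o} function. For Faber polynomials (ii), the lower bound is automatic with $c = 1$ and $q = 0$: by the very definition of $F_k$ via the exterior conformal map $\Phi$ normalized so that $\Phi(z) \sim z/\textup{cap}(E)$ near infinity, the leading coefficient of $F_k$ is exactly $1/(\textup{cap}(E))^k$. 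The upper bound $\|F_k\|_E = O(k^p)$ on an arbitrary Jordan domain is Suetin's theorem from Chapter 2 of \cite{Su}, obtained from the integral representation of $F_k$ on level curves of $\Phi$ together with Warschawski-type estimates on $|\Phi'|$ near $\partial E$.

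Part (iii) proceeds in parallel with (i) but uses the strong-asymptotic framework of Bergman rather than Szeg\H{o} polynomials: for area-orthonormal bases with weight bounded below on Dini-smooth domains, Carleman-type asymptotics give $|b_{k,k}|(\textup{cap}(E))^k \sim c_0\sqrt{k}$, which is far stronger than the required lower bound, while the accompanying uniform boundary asymptotics yield $\|B_k\|_E = O(\sqrt{k})$; hence \eqref{3.2} holds with $p = 1/2$ and any $q > -1/2$. The main obstacle across all three parts is not the application of Corollary \ref{cor3.7}, which is mechanical once \eqref{3.2} is in hand, but locating and correctly invoking the right strong-asymptotic theorems on finite unions of mutually exterior Dini-smooth domains with weights assumed only bounded below: the single-component results are essentially textbook, but the multi-component weighted versions require Widom-type techniques, and one must take some care to ensure that the leading-coefficient constant aligns with the capacity of the full compact set $E$ rather than with the capacity of an individual component.
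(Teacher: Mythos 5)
Your argument is correct in outline, but it takes a substantively different route from the paper, and the paper's route is worth comparing because it is both more elementary and more robustly applicable to the multi-component case. The paper proves \eqref{3.2} for parts (i) and (iii) without ever invoking strong (Szeg\H{o}/Widom or Carleman) asymptotics. For the lower bound on $|b_{n,n}|(\text{cap}(E))^n$ it uses only the extremal characterization
\[
|b_{n,n}|^{-2} = \inf\left\{ \int |Q_n|^2\,d\mu \ : \ Q_n \mbox{ monic of degree }n \right\},
\]
plugging in a monic polynomial $Q_n$ with $\|Q_n\|_E \le C_1 (\text{cap}(E))^n$ (Widom, Totik, Andrievskii guarantee such $Q_n$ for finite unions of smooth curves), which immediately yields $|b_{n,n}|(\text{cap}(E))^n \ge C_1^{-1}(\mu(E))^{-1/2}$, i.e.\ $q=0$. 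For the sup-norm bound it applies a Nikolskii-type inequality from \cite{Pr1}: $\|B_n\|_E \le C_2\, n\,\|B_n\|_{L^2(d\mu)} = C_2 n/\sqrt{c}$, giving $\|B_n\|_E = O(n)$. This is weaker than your proposed $O(1)$ (Szeg\H{o}) or $O(\sqrt{n})$ (Carleman), but any polynomial bound suffices for \eqref{3.2}, so nothing is lost; meanwhile the paper avoids entirely the delicate question you yourself flag, namely whether the multi-component strong-asymptotic theorems under the weak hypothesis $w \ge c > 0$ deliver the normalization constants aligned with $\text{cap}(E)$. For part (ii) the paper, like you, uses that the Faber leading coefficient is exactly $(\text{cap}(E))^{-n}$, but cites K\"ovari--Pommerenke's $\|F_n\|_E = O(n^\alpha)$, $\alpha < 1/2$ (valid for any compact connected set), rather than Suetin's level-curve estimates. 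Your approach would work if the strong-asymptotic results are carefully sourced for the multiply-connected, merely-bounded-below-weight setting, but it demands considerably heavier machinery and more bookkeeping than the Nikolskii-plus-extremal-problem argument actually in the paper.
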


It is clear that if the coefficients have identical distributions, then conditions \eqref{3.4} and \eqref{3.5} reduce to those on the single coefficient $A_0.$ One can relax conditions on the orthogonality measure $\mu$ while preserving the results of Corollaries \ref{cor3.5} and \ref{cor3.8}, e.g., one can show that \eqref{3.7} also holds for polynomials orthogonal with respect to the generalized Jacobi weights of the form $w(s)=v(s)\prod_{j=1}^J |s-s_j|^{\alpha_j},$ where $v(s)\ge c > 0$ a.e., in terms of the inner product defined either by $ds$ or by $dA.$ It is also possible to significantly relax the geometric conditions on $E$, by using the discrepancy results from \cite{AB} for quasiconformal arcs and curves. Thus smoothness is not critical for the results of this section, but the square root in all discrepancy estimates should then be replaced with a different (smaller) power depending on angles at the boundary of $E$.

\section{Proofs} \label{secP}

\subsection{Proofs for Section 2}

One of the key ingredients in the study of asymptotic zero distribution of polynomials is known to be the $n$-th root limiting behavior of their coefficients, see \cite{AB} for details. We prove the following probabilistic version of such results. Let $\{X_n\}_{n=1}^{\infty}$ be a sequence of complex valued random variables, and let $F_n$ be the distribution function of $|X_n|,\ n\in\N.$ We use the assumptions on random variables $X_n$ that match those of \eqref{2.1} and \eqref{2.2} in Section 2. 

\begin{lemma}\label{lem4.1}
If there is $N\in\N$ and a decreasing function $f:[a,\infty)\to[0,1],\ a>1,$ such that
\begin{align*} 
\int_a^{\infty} \frac{f(x)}{x}\,dx < \infty \mbox{ and } 1-F_n(x)\le f(x),\ \forall\, x\in[a,\infty), 
\end{align*}
holds for all $n\ge N$, then
\begin{align} \label{4.1}
\limsup_{n\to\infty} |X_n|^{1/n} \le 1 \quad\mbox{ a.s.}
\end{align}
Further, if there is $N\in\N$ and an increasing function $g:[0,b]\to[0,1],\ 0<b<1,$ such that
\begin{align*}
\int_0^b \frac{g(x)}{x}\,dx < \infty  \mbox{ and } F_n(x)\le g(x),\ \forall\, x\in[0,b],
\end{align*}
holds for all $n\ge N$, then
\begin{align} \label{4.2}
\liminf_{n\to\infty} |X_n|^{1/n} \ge 1 \quad\mbox{ a.s.}
\end{align}
Hence if both assumptions are satisfied, then 
\begin{align} \label{4.3}
\lim_{n\to\infty} |X_n|^{1/n} = 1 \quad\mbox{ a.s.}
\end{align}
\end{lemma}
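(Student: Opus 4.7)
\textbf{Proof proposal for Lemma \ref{lem4.1}.}

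The plan is to prove both one-sided estimates by the first Borel--Cantelli lemma (which requires no independence), and then combine them. For the $\limsup$ bound \eqref{4.1}, fix $\eps>0$ and observe that for all $n\ge N$ large enough that $(1+\eps)^n\ge a$,
\[
\P\bigl(|X_n|>(1+\eps)^n\bigr)=1-F_n\bigl((1+\eps)^n\bigr)\le f\bigl((1+\eps)^n\bigr).
\]
Since $f$ is decreasing, $f((1+\eps)^n)\le \int_{n-1}^{n} f\bigl((1+\eps)^t\bigr)\,dt$, and the substitution $x=(1+\eps)^t$ yields
\[
\sum_{n\ge n_0} f\bigl((1+\eps)^n\bigr)\le \int_{n_0-1}^{\infty} f\bigl((1+\eps)^t\bigr)\,dt=\frac{1}{\log(1+\eps)}\int_{(1+\eps)^{n_0-1}}^{\infty}\frac{f(x)}{x}\,dx<\infty
\]
by \eqref{2.1}. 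Borel--Cantelli gives $\P(|X_n|>(1+\eps)^n \text{ i.o.})=0$, so $\limsup_n |X_n|^{1/n}\le 1+\eps$ a.s. Intersecting over $\eps=1/k$, $k\in\N$, establishes \eqref{4.1}.

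For the $\liminf$ bound \eqref{4.2}, fix $\eps\in(0,1)$ and choose $n_0$ so that $(1-\eps)^n\le b$ for all $n\ge n_0$. Then
\[
\P\bigl(|X_n|<(1-\eps)^n\bigr)\le F_n\bigl((1-\eps)^n\bigr)\le g\bigl((1-\eps)^n\bigr).
\]
Since $g$ is increasing and $(1-\eps)^n$ is decreasing in $n$, the sequence $g((1-\eps)^n)$ is itself decreasing, and $g((1-\eps)^n)\le \int_{n-1}^{n} g\bigl((1-\eps)^t\bigr)\,dt$. The substitution $y=(1-\eps)^t$ gives
\[
\sum_{n\ge n_0} g\bigl((1-\eps)^n\bigr)\le \int_{n_0-1}^{\infty} g\bigl((1-\eps)^t\bigr)\,dt=\frac{1}{-\log(1-\eps)}\int_{0}^{(1-\eps)^{n_0-1}}\frac{g(y)}{y}\,dy<\infty
\]
by \eqref{2.2} (after possibly enlarging $n_0$ so that $(1-\eps)^{n_0-1}\le b$). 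Borel--Cantelli again yields $\liminf_n |X_n|^{1/n}\ge 1-\eps$ a.s., and the countable intersection over $\eps=1/k$ proves \eqref{4.2}. The combined statement \eqref{4.3} is then immediate.

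The main conceptual step is the change of variables $x=(1\pm\eps)^t$, which converts the geometrically-sampled sums $\sum f((1+\eps)^n)$ and $\sum g((1-\eps)^n)$ into exactly the weighted integrals $\int f(x)\,dx/x$ and $\int g(y)\,dy/y$ assumed finite in \eqref{2.1}--\eqref{2.2}; this is the reason why the natural hypothesis for $n$-th root asymptotics is the logarithmically-weighted integrability, equivalent to $\E[\log^\pm |X|]<\infty$ in the i.i.d.\ case. No real obstacle is expected: the only technical care needed is to start the sums past the threshold where $(1+\eps)^n\ge a$ and $(1-\eps)^n\le b$, which contributes only finitely many terms and so does not affect the Borel--Cantelli conclusion.
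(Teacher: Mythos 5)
Your proof is correct and follows essentially the same route as the paper's: apply the first Borel--Cantelli lemma to the events $\{|X_n|>(1+\eps)^n\}$ (resp.\ $\{|X_n|<(1-\eps)^n\}$), bound the probabilities by $f$ (resp.\ $g$) at geometrically spaced points, compare the resulting sum with the logarithmically weighted integral via monotonicity, and then take a countable intersection over $\eps$. The only cosmetic difference is that the paper uses $e^{\eps n}$ where you use $(1+\eps)^n$, which amounts to a relabeling of $\eps$.
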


We use a standard method for finding the almost sure limits of \eqref{4.1}-\eqref{4.3} via the first Borel-Cantelli lemma stated below (see, e.g., \cite[p. 96]{Gut}).

\medskip
\noindent\textbf{Borel-Cantelli Lemma} \textit{Let $\{\cE_n\}_{n=1}^{\infty}$ be a sequence of arbitrary events. If $\sum_{n=1}^{\infty} \P(\cE_n) < \infty$ then $\P(\cE_n \mbox{ occurs infinitely often})=0.$}

\begin{proof}[Proof of Lemma \ref{lem4.1}]
We first prove \eqref{4.1}. For any fixed $\eps>0$, define events $\cE_n=\{|X_n| > e^{\eps n}\}, \ n\in\N.$ Using the first assumption and letting $m:=\max(N,\lfloor\frac{1}{\eps}\log{a}\rfloor)+2$, we obtain
\begin{align*}
\sum_{n=m}^{\infty} \P(\cE_n) &= \sum_{n=m}^{\infty} \left(1 - \P(\{|X_n|\le e^{\eps n}\}) \right) = \sum_{n=m}^{\infty} (1-F_n(e^{\eps n})) \le \sum_{n=m}^{\infty} f(e^{\eps n}) \\ &\le \int_{m-1}^{\infty} f(e^{\eps t})\,dt \le \frac{1}{\eps} \int_a^{\infty} \frac{f(x)}{x}\,dx < \infty.
\end{align*}
Hence $\P(\cE_n \mbox{ occurs infinitely often})=0$ by the first Borel-Cantelli lemma, so that the complementary event $\cE_n^c$ must happen for all large $n$ with probability 1. This means that $|X_n|^{1/n} \le e^{\eps}$ for all sufficiently large $n\in\N$ almost surely.  We obtain that
\[
\limsup_{n\to\infty} |X_n|^{1/n} \le e^{\eps} \quad\mbox{a.s.},
\]
and \eqref{4.1} follows because $\eps>0$ may be arbitrarily small.

The proof of \eqref{4.2} proceeds in a similar way. For any given $\eps>0$, we set $\cE_n=\{|X_n| \le e^{- \eps n}\}, \ n\in\N.$ Using the second assumption and letting $m:=\max(N,\lfloor -\frac{1}{\eps}\log{b}\rfloor)+2$, we have
\begin{align*}
\sum_{n=m}^{\infty} \P(\cE_n) &= \sum_{n=m}^{\infty} F_n(e^{- \eps n}) \le \sum_{n=m}^{\infty} g(e^{- \eps n}) \\ &\le \int_{m-1}^{\infty} g(e^{- \eps t})\,dt \le \frac{1}{\eps} \int_0^{b} \frac{g(x)}{x}\,dx < \infty.
\end{align*}
Hence $\P(\cE_n \mbox{ i.o.})=0$, and $|X_n|^{1/n} > e^{-\eps}$ holds for all sufficiently large $n\in\N$ almost surely.  We obtain that 
\[
\liminf_{n\to\infty} |X_n|^{1/n} \ge e^{-\eps} \quad\mbox{a.s.},
\]
and \eqref{4.2} follows by letting $\eps\to 0$.
\end{proof}

Lemma \ref{lem4.1} implies that any infinite sequence of coefficients satisfying Assumptions 1 and 2 of Section 2 must also satisfy \eqref{4.3}. We state this as follows.

\begin{lemma}\label{lem4.2}
Suppose that \eqref{2.1} and \eqref{2.2} hold for the coefficients $A_n$ of random polynomials. Then the following limits exist almost surely:
\begin{align} \label{4.4}
\lim_{n\to\infty} |A_n|^{1/n} = 1 \quad\mbox{ a.s.},
\end{align}
\begin{align} \label{4.5}
\lim_{n\to\infty} |A_k|^{1/n} = 1 \quad\mbox{ a.s.},\ k=0,1,2,\ldots,
\end{align}
and
\begin{align} \label{4.6}
\lim_{n\to\infty} \max_{0\le k\le n} |A_k|^{1/n} = 1 \quad\mbox{ a.s.}
\end{align}
\end{lemma}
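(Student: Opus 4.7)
The plan is to deduce all three limits in Lemma \ref{lem4.2} from Lemma \ref{lem4.1}, applied in slightly different ways. The common engine is that \eqref{2.1} and \eqref{2.2}, uniformly over all $k\ge N$, give exactly the two hypotheses of Lemma \ref{lem4.1} with $X_n=A_n$. So \eqref{4.4} is essentially immediate: applying Lemma \ref{lem4.1} to the sequence $\{A_n\}_{n\ge 0}$ directly yields \eqref{4.3}, i.e., $\lim_{n\to\infty}|A_n|^{1/n}=1$ almost surely.

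For \eqref{4.5}, I would fix $k$ and observe that $A_k$ is a single random variable. The integrability of $f(x)/x$ on $[a,\infty)$ forces $f(x)\to 0$, so $\P(|A_k|=\infty)\le \lim_{x\to\infty}(1-F_k(x))\le \lim_{x\to\infty}f(x)=0$, hence $|A_k|<\infty$ a.s. Similarly the integrability of $g(x)/x$ near $0$ forces $g(x)\to 0$ as $x\to 0^+$, so $\P(|A_k|=0)=\lim_{x\to 0^+}F_k(x)=0$. Therefore $|A_k|\in(0,\infty)$ a.s., and trivially $|A_k|^{1/n}\to 1$ a.s.\ as $n\to\infty$.

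The main work is \eqref{4.6}. The lower bound is easy because $\max_{0\le k\le n}|A_k|\ge |A_0|$, and by \eqref{4.5} we have $|A_0|^{1/n}\to 1$ a.s., so $\liminf_{n\to\infty}\max_{0\le k\le n}|A_k|^{1/n}\ge 1$ a.s. For the upper bound, the key idea is to reuse the almost-sure $n$th-root bound \eqref{4.1} applied to the whole sequence $\{A_k\}$. Fix $\eps>0$. By \eqref{4.1} there is a (random) $K=K(\omega)$ such that $|A_k(\omega)|\le e^{\eps k}$ for all $k\ge K$, and the finitely many values $|A_0|,\ldots,|A_{K-1}|$ are bounded a.s.\ by some random but finite $M(\omega)$. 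Then for $n\ge K$,
\[
\max_{0\le k\le n}|A_k(\omega)|\le \max\bigl(M(\omega),e^{\eps n}\bigr),
\]
and since $e^{\eps n}$ eventually dominates $M(\omega)$, raising to the $1/n$ power gives $\limsup_{n\to\infty}\max_{0\le k\le n}|A_k|^{1/n}\le e^{\eps}$ a.s. Letting $\eps\to 0$ along a countable sequence yields the matching upper bound, completing \eqref{4.6}.

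The only delicate step is the upper bound in \eqref{4.6}, because one is tempted to try a direct Borel--Cantelli argument on the events $\{\max_{0\le k\le n}|A_k|>e^{\eps n}\}$, which requires a union bound losing a factor of $n+1$ and generally need not be summable. The clean way, as sketched above, is to bypass this by importing the pointwise conclusion $\limsup_k |A_k|^{1/k}\le 1$ a.s.\ from Lemma \ref{lem4.1}, and then trading the maximum for this pointwise bound plus a finite random constant.
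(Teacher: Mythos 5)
Your proof is correct and follows essentially the same route as the paper: apply Lemma \ref{lem4.1} to get \eqref{4.4}, handle \eqref{4.5} separately, and deduce the upper bound in \eqref{4.6} from the a.s.\ pointwise bound $\limsup_k|A_k|^{1/k}\le 1$ together with a finite random constant controlling the initial segment (the paper's $\max_{0\le k\le n_\eps}|A_k(\omega)|^{1/n}$ plays the role of your $M(\omega)^{1/n}$). Your closing remark about why a direct Borel--Cantelli union bound would lose a factor of $n+1$ is exactly the issue that forces the stronger Assumptions 1* and 2* in the triangular-array case (Lemma \ref{lem4.3}), so that observation is well placed.
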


\begin{proof}[Proof of Lemma \ref{lem4.2}]
Limit \eqref{4.4} follows from Lemma \ref{lem4.1} by letting $X_n=A_n,\ n\in\N$. Similarly, if we set for a fixed $k\in\N\cup\{0\}$ that $X_n=A_k,\ n\in\N,$ then \eqref{4.5} is immediate. 

We deduce \eqref{4.6} from \eqref{4.4}. Let $\omega$ be any elementary event such that 
\[
\lim_{n\to\infty} |A_n(\omega)|^{1/n} = 1,
\]
which holds with probability one. We immediately obtain that
\[
\liminf_{n\to\infty} \max_{0\le k\le n} |A_k(\omega)|^{1/n} \ge \liminf_{n\to\infty} |A_n(\omega)|^{1/n} = 1.
\]
On the other hand, elementary properties of limits imply that
\[
\limsup_{n\to\infty} \max_{0\le k\le n} |A_k(\omega)|^{1/n} \le 1.
\]
Indeed, for any $\eps>0$ there $n_{\eps}\in\N$ such that $|A_n(\omega)|^{1/n} \le 1 + \eps$ for all $n\ge n_{\eps}$ by \eqref{4.4}. Hence 
\[
\max_{0\le k\le n} |A_k(\omega)|^{1/n} \le \max\left(\max_{0\le k\le n_{\eps}} |A_k(\omega)|^{1/n}, 1+\eps \right) \to 1+\eps \quad \mbox{ as } n\rightarrow\infty,
\]
and the result follows by letting $\eps\to 0.$
\end{proof}

\medskip
We state a somewhat modified version of the result due to Blatt, Saff and Simkani \cite{BSS}, which is used to prove all equidistribution theorems of Section 2.

\noindent
{\bf Theorem BSS.} {\em Let $E\subset\C$ be a compact set, $\textup{cap}(E)>0$.  If a sequence of polynomials $P_n(z) = \sum_{k=0}^n c_{k,n} z^k$ satisfy
\begin{align} \label{4.7}
\limsup_{n\to\infty} \|P_n\|_E^{1/n} \le 1 \quad \mbox{and} \quad \lim_{n \rightarrow \infty} |c_{n,n}|^{1/n}=1/\textup{cap}(E),
\end{align}
and for any closed set $A$ in the bounded components of $\C\setminus \textup{supp}\,\mu_E$ we have
\begin{align} \label{4.8}
\lim_{n\to\infty} \tau_n(A) = 0,
\end{align}
then the zero counting measures $\tau_n$ converge weakly to $\mu_E$ as $n\to\infty.$}

It is known that \eqref{4.8} holds if every bounded component of $\C\setminus \textup{supp}\,\mu_E$ contains a compact set $K$ such that 
\begin{align} \label{4.9}
\liminf_{n\to\infty} \|P_n\|_K^{1/n} \ge 1,
\end{align}
see Bloom \cite[p. 1706]{Bl1} and \cite[p. 134]{Bl2}, and see Grothmann \cite[p. 352]{Gr} (also \cite{AB}) for the case of unbounded component of $\C\setminus \textup{supp}\,\mu_E$. In applications, this compact set $K$ is often selected as a single point.

\begin{proof}[Proof of Theorem \ref{thm2.1}]
We apply Theorem BSS with $E=\T$. Recall that $\textup{cap}(\T)=1$ and $d\mu_{\T}(e^{it})=dt/(2\pi),$ see \cite{Ra}. It is immediate that
\[
\|P_n\|_{\T} \le \sum_{k=0}^n |A_k| \le (n+1) \max_{0\le k\le n} |A_k|.
\]
Using \eqref{4.4} and \eqref{4.6} of Lemma \ref{lem4.2}, we conclude that \eqref{4.7} holds almost surely. On the other hand, \eqref{4.5} with $k=0$ also gives that
\[
\lim_{n\to\infty} |P_n(0)|^{1/n} = \lim_{n\to\infty} |A_0|^{1/n} = 1 \quad\mbox{ a.s.},
\]
meaning that \eqref{4.9} is satisfied for $K=\{0\}$ almost surely. Hence \eqref{4.8} holds a.s. for any compact subset $A$ of the unit disk, and the result follows.
\end{proof}

\begin{proof}[Proof of Theorem \ref{thm2.2}]
Since $\textup{supp}\,\mu_E\subset E$, we have that $\C\setminus \textup{supp}\,\mu_E$ has no bounded components in this case, and \eqref{4.8} of Theorem BSS holds trivially. Thus we only need to prove \eqref{4.7} for polynomials
\[
P_n(z) = \sum_{k=0}^n A_k B_k(z) = A_n b_{n,n} z^n + \ldots,\quad n\in\N.
\]
Applying \eqref{4.4} of Lemma \ref{lem4.2} and \eqref{2.3}, we obtain for their leading coefficients that
\[
\lim_{n\to\infty} |A_n b_{n,n}|^{1/n} = 1/\textup{cap}(E) \quad\mbox{ a.s.}
\]
Furthermore,
\[
\|P_n\|_E \le \sum_{k=0}^n |A_k| \|B_k\|_E \le (n+1) \max_{0\le k\le n} |A_k| \, \max_{0\le k\le n} \|B_k\|_E.
\]
Note that \eqref{2.3} implies by a simple argument (already used in the proof of Lemma \ref{lem4.2}) that
\[
\limsup_{n\to\infty} \max_{0\le k\le n} \|B_k\|_E^{1/n} \le 1.
\]
Combining this fact with \eqref{4.6} of Lemma \ref{lem4.2}, we obtain that
\[
\limsup_{n\to\infty} \|P_n\|_E^{1/n} \le 1 \quad\mbox{ a.s.}
\]
\end{proof}

\begin{proof}[Proof of Corollary \ref{cor2.3}]
Since the coefficient conditions \eqref{2.1}-\eqref{2.2} hold by our assumptions, we only need to verify that the bases satisfy \eqref{2.3} in both cases (i) and (ii). Then almost sure convergence of $\tau_n$ to $\mu_E$ will follow from Theorem \ref{thm2.2}.

(i) Our assumptions on the orthogonality measure $\mu$ and set $E$ imply that the orthogonal polynomials have regular asymptotic behavior expressed by \eqref{2.3} according to Theorem 4.1.1 and Corollary 4.1.2 of \cite[pp. 101-102]{ST}. Corollary 4.1.2 is stated for a set $E$ consisting of smooth arcs and curves, but its proof holds for arbitrary rectifiable case, because $\mu$ and $\mu_E$ are both absolutely continuous with respect to the arclength $ds$. In fact, it is known that the density of the equilibrium measure is expressed via normal derivatives of the Green function $g_E$ for the complement of $E$ from both sides of the arcs:
\[
d\mu_E=\frac{1}{2\pi}\left(\frac{\partial g_E}{\partial n_+} + \dfrac{\partial g_E}{\partial n_-}\right)\,ds,
\]
see Theorem 1.1 and Example 1.2 of \cite{Pr2}. Furthermore, $d\mu_E/ds>0$ almost everywhere in the sense of arclength on $E,$ see Garnett and Marshall \cite[Chapter II]{GM}.

(ii) Assumptions imposed on $E$ imply that $\textup{cap}(E)>0$, and that Faber polynomials are well defined. In particular, the Faber polynomials of $E$ satisfy $B_n(z)=z^n/(\textup{cap}(E))^n + \ldots,\ n=0,1,\ldots,$ by definition, see \cite[Section 2.1]{Su}. Furthermore, K\"ovari and Pommerenke \cite{KP} showed that the Faber polynomials of any compact connected set do not grow fast:
\[
\|B_n\|_E = O(n^{\alpha}) \quad\mbox{as } n\to\infty,
\]
where $\alpha<1/2.$ Hence \eqref{2.3} holds true in this case.
\end{proof}

\begin{proof}[Proof of Theorem \ref{thm2.4}]
It is known that in all three considered cases of Szeg\H{o}, Bergman and Faber bases, we have \eqref{2.3} satisfied. For the cases of Bergman and Szeg\H{o} polynomials, see pages 288-290 and pages 336-338 respectively in the book of Smirnov and Lebedev \cite{SL}. The case of Faber polynomials was considered above in the proof of Corollary \ref{cor2.3}, part (ii).  Arguing as in the proof of Theorem \ref{thm2.2}, we see that \eqref{4.7} of Theorem BSS holds true for $P_n(z) = \sum_{k=0}^n A_k B_k(z).$ Furthermore, for any compact set $K$ in the interior of $E$ denoted by $E^\circ$, we have (cf. \cite[pp. 290 and 338]{SL} and \cite[Section 2.3]{Su}) that 
\[
\limsup_{n\to\infty} \|B_n\|_K^{1/n} < 1.
\]
Since \eqref{4.4} holds with probability one, we conclude that the series $f(z)=\sum_{k=0}^{\infty} A_k B_k(z)$ converges uniformly on compact subsets of the analytic Jordan domain $E^\circ$ with probability one. Its limit is (almost surely) an analytic function $f$ that cannot vanish identically because of \eqref{4.4} and uniqueness of series expansions in Szeg\H{o}, Bergman and Faber polynomials (see \cite[pp. 293 and 340]{SL} and Section 6.3 of \cite{Su} for these facts). Hence for each limit $f$ there is a point $z_f\in E^{\circ}$ such that $f(z_f)\neq 0.$ This means $\lim_{n\to\infty} P_n(z_f) = f(z_f) \neq 0,$ so that \eqref{4.9} is satisfied with $K=\{z_f\}$. Thus \eqref{4.8} holds almost surely for any compact subset of $E^\circ$ (the only bounded component of $\overline{\C}\setminus \textup{supp}\,\mu_E = \overline{\C}\setminus \partial E$), and the result follows from Theorem BSS.
\end{proof}

\begin{proof}[Proof of Theorem \ref{thm2.5}]
We use Theorem BSS again. Condition \eqref{4.7} is verified exactly as in the proof of Theorem \ref{thm2.2}, so that we omit that argument. It remains to show that \eqref{4.8} holds almost surely as a consequence of \eqref{2.4}, which is again done via \eqref{4.9}. In particular, we prove that 
\begin{align} \label{4.10}
\liminf_{n\to\infty} |P_n(w)|^{1/n} \ge 1
\end{align}
holds almost surely for every given $w\in\C$. Define the events 
\[
\cE_n = \{|P_n(w)| \le e^{- \eps n}\} =  \left\{\frac{1}{\eps}\log^-|P_n(w)| \ge n \right\}, \quad n\in\N.
\] 
For any fixed $t>1,$ Chebyshev's inequality gives
\[
\P(\cE_n) \le \frac{1}{n^t} \E\left[ \left(\frac{1}{\eps}\log^-|P_n(w)|\right)^t \right], \quad n\in\N.
\]
Note that 
\begin{align*}
\left(\log^-|P_n(w)|\right)^t &\le \left(\log^-|b_{0,0}| + \log^-\left|A_0 + \sum_{k=1}^n \frac{A_k}{b_{0,0}} B_k(w)\right|\right)^t \\ &\le 2^t \left(\left(\log^-|b_{0,0}|\right)^t + \left(\log^-\left|A_0 + \sum_{k=1}^n \frac{A_k}{b_{0,0}} B_k(w)\right|\right)^t \right).
\end{align*}
We use independence of $A_0$ from the rest of coefficients and \eqref{2.4} to estimate
\[
\E\left[\left(\log^-\left|A_0 + \sum_{k=1}^n \frac{A_k}{b_{0,0}} B_k(w)\right|\right)^t \right] \le \sup_{z\in\C}\, \E\left[(\log^-|A_0-z|)^t\right] =: C < \infty,
\]
which gives
\[
\E\left[\left(\log^-|P_n(w)|\right)^t \right] \le 2^t \left(\left(\log^-|b_{0,0}|\right)^t + C\right).
\]
It follows that
\begin{align*}
\sum_{n=1}^{\infty} \P(\cE_n) \le \frac{2^t}{\eps^t} \left(\left(\log^-|b_{0,0}|\right)^t + C\right) \sum_{n=1}^{\infty} \frac{1}{n^t} < \infty.
\end{align*}
Hence $\P(\cE_n \mbox{ i.o.})=0$ by the first Borel-Cantelli lemma, and $|P_n(w)|^{1/n} > e^{-\eps}$ holds for all sufficiently large $n\in\N$ with probability one.  We obtain that 
\[
\liminf_{n\to\infty} |P_n(w)|^{1/n} \ge e^{-\eps} \quad\mbox{a.s.},
\]
and \eqref{4.10} follows by letting $\eps\to 0$.
\end{proof}

The following lemma serves as a substitute of Lemma \ref{lem4.2}. It is necessary for the proofs of analogs of results from Section 2 generalized under Assumptions 1* and 2*.

\begin{lemma}\label{lem4.3}
Suppose that \eqref{2.5} and \eqref{2.6} hold for the coefficients $A_{k,n}$ of random polynomials. Then the following limits exist almost surely:
\begin{align} \label{4.11}
\lim_{n\to\infty} |A_{n,n}|^{1/n} = 1 \quad\mbox{ a.s.},
\end{align}
\begin{align} \label{4.12}
\lim_{n\to\infty} |A_{k,n}|^{1/n} = 1 \quad\mbox{ a.s.},\ k\in\N\cup\{0\},
\end{align}
and
\begin{align} \label{4.13}
\lim_{n\to\infty} \max_{0\le k\le n} |A_{k,n}|^{1/n} = 1 \quad\mbox{ a.s.}
\end{align}
\end{lemma}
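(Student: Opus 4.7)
The plan is to reduce \eqref{4.11} and \eqref{4.12} to direct applications of Lemma \ref{lem4.1}, and then handle \eqref{4.13} separately, which is where the real content of the lemma lives. First, I would observe that the hypothesis of Assumption 1* implies the integrability condition of Lemma \ref{lem4.1}: since $\log x \ge \log a > 0$ on $[a,\infty)$, we have $\int_a^\infty f(x)/x\,dx \le (\log a)^{-1} \int_a^\infty f(x)\log x / x \, dx < \infty$, and monotonicity of $f(x)\log x$ forces monotonicity of $f$ itself. Assumption 2* already matches Lemma \ref{lem4.1} verbatim. Applying Lemma \ref{lem4.1} to the sequence $X_n := A_{n,n}$ then gives \eqref{4.11}, and applying it to $X_n := A_{k,n}$ for each fixed $k$ (considered for $n \ge \max(N,k)$) gives \eqref{4.12}.

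For \eqref{4.13}, the lower bound $\liminf_n \max_k |A_{k,n}|^{1/n} \ge 1$ a.s.\ is immediate from \eqref{4.11} since the max dominates $|A_{n,n}|$. The upper bound, however, cannot be deduced from the simple limit-of-a-max argument used in Lemma \ref{lem4.2}: there, the $A_k$ were fixed across $n$, so enlarging the index range only added one new term per step; here, all $n+1$ entries of the array vary with $n$, so a genuinely uniform tail bound is required. I would therefore fix $\eps>0$, introduce events $\cE_n := \{\max_{0\le k\le n}|A_{k,n}| > e^{\eps n}\}$, and estimate by a union bound together with Assumption 1*:
\[
\P(\cE_n) \le \sum_{k=0}^n (1-F_{k,n}(e^{\eps n})) \le (n+1)\, f(e^{\eps n})
\]
for all $n$ large enough that $e^{\eps n} \ge a$. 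The Borel-Cantelli lemma would then close the argument — provided $\sum_n (n+1) f(e^{\eps n}) < \infty$.

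This summability estimate is the main obstacle, and it is precisely what forces the strengthening of Assumption 1 into Assumption 1*. The idea is that monotonicity of $f(x)\log x$ makes the map $t \mapsto \eps t\,f(e^{\eps t}) = f(e^{\eps t})\log(e^{\eps t})$ decreasing on $[(\log a)/\eps,\infty)$, so an integral comparison bounds $\sum_{n\ge m} n\,f(e^{\eps n})$ by $\eps^{-1}\int_{m-1}^\infty \eps t\,f(e^{\eps t})\,dt$. The substitution $x=e^{\eps t}$ then converts this to a constant multiple of $\int_{e^{\eps(m-1)}}^\infty f(x)\log x / x\,dx$, which is finite by hypothesis. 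Borel-Cantelli gives $\max_{0\le k\le n}|A_{k,n}|^{1/n} \le e^\eps$ eventually a.s., and letting $\eps \to 0$ through a countable sequence yields \eqref{4.13}. I note in passing that the joint independence stipulated in Assumption 1* is not actually invoked in this proof, since a union bound suffices; its role lies elsewhere in the paper.
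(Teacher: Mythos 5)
Your proof is correct and follows the same skeleton as the paper's for \eqref{4.11}, \eqref{4.12}, and the lower bound in \eqref{4.13}; the one genuine divergence is in the upper tail bound for $Y_n := \max_{0\le k\le n}|A_{k,n}|$. The paper invokes the joint independence in Assumption 1* to write $F_n(x) = \prod_{k=0}^n F_{k,n}(x) \ge (1-f(x))^{n+1}$ and then applies Bernoulli's inequality to get $1-F_n(x) \le (n+1)f(x)$, whereas you get the identical bound $\P(Y_n > x) \le \sum_{k=0}^n \bigl(1-F_{k,n}(x)\bigr) \le (n+1)f(x)$ directly from subadditivity of probability, with no independence assumption at all. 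Since everything downstream (the integral comparison using monotonicity of $t\mapsto \eps t\, f(e^{\eps t})$, the substitution $x=e^{\eps t}$ turning the tail sum into a constant multiple of $\int f(x)\log x/x\,dx$, and Borel--Cantelli) is the same, your route establishes the lemma under strictly weaker hypotheses: Assumption 1* can be relaxed by dropping the independence clause entirely. This is worth noting because the paper's phrasing suggests independence is needed for \eqref{4.13}, and your union-bound argument shows it is not; the only real strengthening of Assumption 1 over Assumption 1* that the proof actually uses is the extra $\log x$ weight in the integrability condition, which is what compensates for the factor $(n+1)$. Your preliminary remarks (that the $\log x$-weighted integrability implies the unweighted one since $\log a > 0$, and that monotonicity of $f(x)\log x$ forces monotonicity of $f$) are also correct and are needed to invoke Lemma \ref{lem4.1} for \eqref{4.11}--\eqref{4.12}, a point the paper leaves implicit.
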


\begin{proof}[Proof of Lemma \ref{lem4.3}]
Limits \eqref{4.11} and \eqref{4.12} follow from Lemma \ref{lem4.1} by correspondingly letting $X_n=A_{n,n},\ n\in\N$, and $X_n=A_{k,n},\ n\in\N,$ for a fixed $k\in\N\cup\{0\}$. In fact, this argument holds under weaker assumptions such as \eqref{2.1} and \eqref{2.2}, and does not require independence of coefficients.

In order to prove \eqref{4.13}, we introduce the random variable $Y_n=\max_{0\le k\le n} |A_{k,n}|$, and denote its distribution function by $F_n(x),\ n\in\N.$ Note that 
\[
\liminf_{n\to\infty} |Y_n|^{1/n} \ge \liminf_{n\to\infty} |A_{n,n}|^{1/n} = 1 \quad\mbox{a.s.}
\]
Using independence of $|A_{k,n}|,\ k=0,\ldots,n,$ for each $n\ge N$, and applying \eqref{2.5}, we estimate
\begin{align*}
F_n(x) = \prod_{k=0}^n F_{k,n}(x) \ge (1-f(x))^{n+1} \ge 1-(n+1)f(x), \quad x\ge a.
\end{align*}
For any fixed $\eps>0$, define events $\cE_n=\{|Y_n| > e^{\eps n}\}, \ n\in\N.$ Letting $m:=\max(N,\lfloor\frac{1}{\eps}\log{a}\rfloor)+2$, we obtain from the above estimate and \eqref{2.5} that
\begin{align*}
\sum_{n=m}^{\infty} \P(\cE_n) &= \sum_{n=m}^{\infty} \left(1 - \P(\{|Y_n|\le e^{\eps n}\}) \right) = \sum_{n=m}^{\infty} (1-F_n(e^{\eps n})) \le \sum_{n=m}^{\infty} (n+1)f(e^{\eps n}) \\ &\le 2 \int_{m-1}^{\infty} t\,f(e^{\eps t})\,dt \le \frac{2}{\eps} \int_a^{\infty} \frac{f(x)\log{x}}{x}\,dx < \infty.
\end{align*}
Hence $\P(\cE_n \mbox{ i.o.})=0$ by the first Borel-Cantelli lemma, and $|Y_n|^{1/n} \le e^{\eps}$ for all sufficiently large $n\in\N$ almost surely.  We obtain that
\[
\limsup_{n\to\infty} |Y_n|^{1/n} \le e^{\eps} \quad\mbox{a.s.},
\]
and \eqref{4.13} follows after letting $\eps\to 0.$
\end{proof}

\subsection{Proofs for Section 3}

The following lemma is used several times below.

\begin{lemma} \label{lem4.4}
If $A_k,\ k=0,\ldots,n,$ are complex random variables satisfying $\E[|A_k|^t]<\infty,\ k=0,\ldots,n,$ for a fixed $t\in(0,1],$ then
\begin{align} \label{4.14}
\E\left[\log\sum_{k=0}^n |A_k|\right] \le \frac{1}{t}\log \left(\sum_{k=0}^n \E[|A_k|^t]\right).
\end{align}
\end{lemma}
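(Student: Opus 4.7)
The plan is to combine two elementary inequalities: the subadditivity of $x \mapsto x^t$ for $t\in(0,1]$, and Jensen's inequality applied to the concave function $\log$. The factor $1/t$ on the right-hand side of \eqref{4.14} strongly suggests that we should first raise $\sum_{k=0}^n |A_k|$ to the power $t$ before taking any expectation, so that the assumption $\E[|A_k|^t]<\infty$ can be used directly.

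First I would observe that, since $t\in(0,1]$, the function $x\mapsto x^t$ is subadditive on $[0,\infty)$; in particular, by induction on $n$,
\[
\left(\sum_{k=0}^n |A_k|\right)^t \le \sum_{k=0}^n |A_k|^t.
\]
Taking $\log$ (which is monotone increasing) of both sides and dividing by $t>0$ gives the pointwise bound
\[
\log\sum_{k=0}^n |A_k| \le \frac{1}{t}\log\sum_{k=0}^n |A_k|^t.
\]
Taking expectations preserves this inequality, so it remains to bound $\E\bigl[\log\sum_{k=0}^n |A_k|^t\bigr]$.

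For this final step I would invoke Jensen's inequality for the concave function $\log$: since $\sum_{k=0}^n |A_k|^t$ is a non-negative random variable with finite mean (by the hypothesis $\E[|A_k|^t]<\infty$ and linearity of expectation), we have
\[
\E\left[\log\sum_{k=0}^n |A_k|^t\right] \le \log\E\left[\sum_{k=0}^n |A_k|^t\right] = \log\sum_{k=0}^n \E[|A_k|^t].
\]
Chaining the two bounds yields \eqref{4.14}. No independence assumption on the $A_k$ is needed, which is consistent with the setting of Section 3. I do not anticipate any real obstacle: the only minor technical point worth noting is that $\log\sum_{k=0}^n|A_k|^t$ is bounded below by $\log|A_k|^t$ for each $k$ and so may fail to be integrable only if it equals $-\infty$ on a set of positive measure, which would make the left-hand side of \eqref{4.14} equal to $-\infty$ and the inequality trivial; otherwise the expectation is well-defined (possibly $-\infty$) and Jensen applies in the standard way.
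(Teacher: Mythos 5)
Your proof is correct and follows essentially the same route as the paper: establish the pointwise bound $\bigl(\sum_{k=0}^n|A_k|\bigr)^t\le\sum_{k=0}^n|A_k|^t$ from the subadditivity of $x\mapsto x^t$, take logarithms and divide by $t$, then finish with Jensen's inequality for the concave logarithm and linearity of expectation. The paper derives the subadditivity step via the normalization $x_i=|A_i|/\sum_k|A_k|$ rather than citing it directly, but this is only a cosmetic difference.
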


\begin{proof}
We first state an elementary inequality. If $x_i \ge 0,\ i=0,\ldots,n,$ and $\sum_{i=0}^n x_i = 1,$ then
\[ 
\sum_{i=0}^n (x_i)^t \ge \sum_{i=0}^n x_i = 1
\]
for $t\in(0,1]$. Applying this inequality with $x_i = |A_i|/\sum_{k=0}^n |A_k|,$ we obtain that
\begin{align*}
\left(\sum_{k=0}^n |A_k|\right)^t \le \sum_{k=0}^n |A_k|^t
\end{align*}
and
\begin{align*}
\E\left[\log\sum_{k=0}^n |A_k|\right] \le \frac{1}{t}\E\left[\log\left(\sum_{k=0}^n |A_k|^t\right)\right].
\end{align*}
Jensen's inequality and linearity of expectation now give that
\begin{align*}
\E\left[\log\sum_{k=0}^n |A_k|\right] &\le \frac{1}{t}\log \E\left[\sum_{k=0}^n |A_k|^t\right] = \frac{1}{t}\log \left(\sum_{k=0}^n \E[|A_k|^t]\right).
\end{align*}
\end{proof}

\begin{proof}[Proof of Theorem \ref{thm3.1}]
We use the following version of the discrepancy theorem due to Erd\H{o}s and Tur\'an stated in Proposition 2.1 of \cite{PS} (see also \cite{ET}, \cite{Ga} and \cite{AB}):
\begin{align*}
\left| \tau_n\left(\cA_r(\alpha,\beta)\right) - \frac{\beta-\alpha}{2\pi}\right| \leq
\sqrt{\frac{2\pi}{{\bf k}}} \sqrt{\frac{1}{n}\,\log\frac{\|P_n\|_{\T}}{\sqrt{|A_0A_n|}}} + \frac{2}{n(1-r)} \, \log\frac{\|P_n\|_{\T}}{\sqrt{|A_0A_n|}}.
\end{align*}
Applying Jensen's inequality, we obtain that
\begin{align*}
\E\left[\left| \tau_n\left(\cA_r(\alpha,\beta)\right) - \frac{\beta-\alpha}{2\pi}\right|\right] &\leq
\sqrt{\frac{2\pi}{{\bf k}}} \sqrt{\frac{1}{n}\, \E\left[ \log\frac{\|P_n\|_{\T}}{\sqrt{|A_0 A_n|}} \right]} + \frac{2}{n(1-r)} \, \E\left[\log\frac{\|P_n\|_{\T}}{\sqrt{|A_0 A_n|}}\right] \\ &\leq
C_r \sqrt{\frac{1}{n}\, \E\left[ \log\frac{\|P_n\|_{\T}}{\sqrt{|A_0 A_n|}} \right]},
\end{align*}
where the last inequality holds for all sufficiently large $n\in\N.$ Since $\|P_n\|_{\infty} \le \sum_{k=0}^n |A_k|,$ we use the linearity of expectation and \eqref{4.14} to estimate
\begin{align*}
\E\left[ \log\frac{\|P_n\|_{\T}}{\sqrt{|A_0 A_n|}} \right] &\le \E\left[\log \sum_{k=0}^n |A_k|\right] - \frac{1}{2}\E[\log|A_0A_n|] \\ &\le \frac{1}{t}\log \left(\sum_{k=0}^n \E[|A_k|^t]\right) - \frac{1}{2}\E[\log|A_0A_n|].
\end{align*}
The latter bound is finite by our assumptions.
\end{proof}

\begin{proof}[Proof of Corollary \ref{cor3.2}]
The result follows immediately upon using the uniform bounds $M$ and $L$ in estimate \eqref{3.1}.
\end{proof}

\begin{proof}[Proof of Theorem \ref{thm3.3}]
Note that the leading coefficient of $P_n$ is $A_n b_{n,n}$. Theorem 4.2 in Chapter 2 of \cite[p. 80]{AB} gives a discrepancy estimate of the form
\begin{align} \label{4.15}
\left|(\tau_n-\mu_E)(\cA_r)\right| \le C \sqrt{\frac{1}{n}\, \log\frac{\|P_n\|_E}{|A_n b_{n,n}| (\textup{cap}(E))^n}},
\end{align}
where constant $C$ depends only on $E$ and $r.$ Using this estimate and Jensen's inequality, we obtain that 
\begin{align*}
\E\left[\left|(\tau_n-\mu_E)(\cA_r)\right|\right] &\leq C \sqrt{\frac{1}{n}\, \E\left[ \log\frac{\|P_n\|_E}{|A_n b_{n,n}| (\textup{cap}(E))^n} \right]} \\ 
&\leq C \sqrt{\frac{1}{n}\,\left( \E[\log\|P_n\|_E] - \log(|b_{n,n}|(\textup{cap}(E))^n) - \E[\log|A_n|] \right)}.
\end{align*}
It is clear that
\[
\|P_n\|_E \le \sum_{k=0}^n |A_k| \|B_k\|_E \le \max_{0 \le k \le n} \|B_k\|_E \sum_{k=0}^n |A_k|.
\]
Hence \eqref{4.14} yields
\begin{align*}
\E\left[\log \|P_n\|_E\right] &\le \E\left[\log \sum_{k=0}^n |A_k|\right] + \log \max_{0 \le k \le n} \|B_k\|_E \\ &\le \frac{1}{t}\log \left(\sum_{k=0}^n \E[|A_k|^t]\right) + \log \max_{0 \le k \le n} \|B_k\|_E.
\end{align*}
Thus \eqref{3.3} follows by combining the above estimates.

When $E$ is a finite union of closed non-intersecting intervals, one needs to use the discrepancy estimate of Theorem 5.1 in Chapter 2 of \cite[p. 86]{AB}, which has the same form as \eqref{4.15} but with $C=8$ and $\cA_r$ being the union of vertical strips $\{z\in\C: \Re(z)\in E\}.$ The rest of the proof remains identical.
\end{proof}

\begin{proof}[Proof of Corollary \ref{cor3.4}]
We estimate the right hand side of \eqref{3.3}. For this purpose, we make two immediate observation that \eqref{3.4} implies
\[
\frac{1}{tn}\log \left(\sum_{k=0}^n \E[|A_{k,n}|^t]\right) \le O\left(\frac{\log{n}}{n}\right)\quad \mbox{as }n\to\infty,
\]
while \eqref{3.5} implies
\[
-\frac{1}{n}\,\E[\log|A_{n,n}|] \le O\left(\frac{1}{n}\right)\quad \mbox{as }n\to\infty.
\]
If \eqref{2.3} is satisfied, then 
\[
\limsup_{n\to\infty} \left(\max_{0 \le k \le n} \|B_k\|_E\right)^{1/n} = \limsup_{n\to\infty} \left(\|B_k\|_E\right)^{1/n} \le 1,
\]
and therefore
\[
\limsup_{n\to\infty} \frac{1}{n}\log \frac{\max_{0 \le k \le n} \|B_k\|_{\infty}}{|b_{n,n}|(\textup{cap}(E))^n} \le 0.
\]
Hence \eqref{3.6} follows from \eqref{2.3}, \eqref{3.3} and the above inequalities. On the other hand, if \eqref{3.2} is satisfied, then
\[
\frac{1}{n}\log \frac{\max_{0 \le k \le n} \|B_k\|_{\infty}}{|b_{n,n}|(\textup{cap}(E))^n} \le O\left(\frac{\log{n}}{n}\right)\quad \mbox{as }n\to\infty,
\]
and \eqref{3.7} follows in the same manner.
\end{proof}

\begin{proof}[Proof of Corollary \ref{cor3.5}]
In both cases, we need to verify that \eqref{3.2} is satisfied, and then apply Corollary \ref{cor3.4} to conclude that \eqref{3.7} holds. 

(i) The leading coefficient $b_{n,n}$ of the orthonormal polynomial $B_n$ provides the solution of the following extremal problem \cite[p. 14]{ST}:
\[ 
|b_{n,n}|^{-2} = \inf\left\{ \int |Q_n|^2\,d\mu \ : \ Q_n \mbox{ is a monic polynomial of degree }n \right\}.
\]
We use a monic polynomial $Q_n(z)$ that satisfies $\|Q_n\|_E \le C_1 (\textup{cap}(E))^n,$ where $C_1>0$ depends only on $E$. Existence of such polynomial for a set $E$ composed of finitely many smooth arcs and curves was first proved by Widom \cite{Wi} (see also Totik \cite{To}). Andrievskii \cite{An} recently obtained much more general results for unions of arcs and curves that are not necessarily smooth. We estimate that
\begin{align*} 
|b_{n,n}| \ge \left(\int |Q_n|^2\,d\mu \right)^{-1/2} \ge \left(\mu(E)\right)^{-1/2} \|Q_n\|_E^{-1} \ge C_1^{-1} \left(\mu(E)\right)^{-1/2} (\textup{cap}(E))^{-n}.
\end{align*}
Thus the second part of \eqref{3.2} is proved. For the proof of the first part, we apply the Nikolskii type inequality (see Theorem 1.1 of \cite{Pr1} and comments on page 689):
\[
\|B_n\|_E \le C_2 n \left(\int_E |B_n|^2\,ds\right)^{1/2} \le \frac{C_2}{\sqrt{c}}\, n \left(\int_E |B_n|^2\,w(s)ds\right)^{1/2} = \frac{C_2}{\sqrt{c}}\, n.
\]
We also used that $B_n$ is orthonormal with respect to $d\mu(s)=w(s)ds$ on the last step.

(ii) In fact, \eqref{3.2} was already verified for the Faber polynomials of any compact connected set $E$ in the proof of Corollary \ref{cor2.3}. Recall that the Faber polynomials of $E$ have the form $F_n(z)=z^n/(\textup{cap}(E))^n + \ldots,\ n=0,1,\ldots,$ by definition, see \cite{Su}. Furthermore, $\|F_n\|_E = O(n^{\alpha})\ \mbox{as } n\to\infty,$ where $\alpha<1/2,$ by \cite{KP}.
\end{proof}

\begin{proof}[Proof of Theorem \ref{thm3.6}]
This proof is similar to that of Theorem \ref{thm3.3}. Observe that the leading coefficient of $P_n$ is $A_n b_{n,n}$. Let $\cA_r$ be a ``generalized curvilinear sector" (neighborhood)  associated with a subarc $J$ of $\partial S.$ We use Theorem 4.5 from Chapter 2 of \cite[p. 85]{AB} for the needed discrepancy estimate:
\begin{align} \label{4.16}
\left|(\tau_n-\mu_E)(\cA_r)\right| \le C \sqrt{\frac{1}{n}\, \log\frac{\|P_n\|_E}{|A_n b_{n,n}| (\textup{cap}(E))^n} + \frac{1}{n}\, \log\frac{\|P_n\|_E}{|P_n(w)|}},
\end{align}
where constant $C$ depends only on $E$ and $r.$ We again apply Jensen's inequality to obtain that 
\begin{align*}
\E\left[\left|(\tau_n-\mu_E)(\cA_r)\right|\right] &\leq C \sqrt{\frac{1}{n}\, \E\left[ \log\frac{\|P_n\|_E}{|A_n b_{n,n}| (\textup{cap}(E))^n} \right] + \frac{1}{n}\,\E\left[ \log\frac{\|P_n\|_E}{|P_n(w)|}\right]}.
\end{align*}
It follows exactly as in the proof of Theorem \ref{thm3.3} that
\begin{align*}
\E\left[\log \|P_n\|_E\right] \le \frac{1}{t}\log \left(\sum_{k=0}^n \E[|A_k|^t]\right) + \log \max_{0 \le k \le n} \|B_k\|_E.
\end{align*}
and
\begin{align*}
\E\left[ \log\frac{\|P_n\|_E}{|A_n b_{n,n}| (\textup{cap}(E))^n} \right] \le \frac{1}{t}\log \left(\sum_{k=0}^n \E[|A_k|^t]\right) + \log \frac{\max_{0 \le k \le n} \|B_k\|_E}{|b_{n,n}|(\textup{cap}(E))^n} - \E[\log|A_n|].
\end{align*}
Hence \eqref{3.8} follows as combination of the above estimates.

We now proceed to the lower bound for the expectation of $\log|A_n P_n(w)|$ in \eqref{3.9} by estimating that
\begin{align*}
\E[\log|A_n P_n(w)|] &= \E\left[\log \left|A_n \sum_{k=0}^n A_k B_k(w)\right|\right]\\
&= \E[\log|A_n|] + \E\left[\log \left|\sum_{k=0}^n A_k B_k(w)\right|\right]\\
&= \E[\log|A_n|] + \log|b_{0,0}| + \E\left[\log \left| A_0 + \sum_{k=1}^n A_k \frac{B_k(w)}{b_{0,0}}\right|\right]\\
&\ge \log|b_{0,0}| + \E[\log|A_n|] + L > -\infty,
\end{align*}
where we used that $b_{0,0} \neq 0$ and that 
\[
\E\left[\log \left| A_0 + \sum_{k=1}^n A_k \frac{B_k(w)}{b_{0,0}}\right|\right] \ge \inf_{z\in\C} \E[\log|A_0+z|] \ge L
\]
by independence of $A_0$ from $\{A_k\}_{k=1}^n$.
\end{proof}

\begin{proof}[Proof of Corollary \ref{cor3.7}]
We use \eqref{3.8} and proceed in the same way as in the proof of Corollary \ref{cor3.4}. Thus \eqref{3.4} implies that
\[
\frac{2}{tn}\log \left(\sum_{k=0}^n \E[|A_{k,n}|^t]\right) \le O\left(\frac{\log{n}}{n}\right)\quad \mbox{as }n\to\infty,
\]
and \eqref{3.5} implies that
\[
-\frac{1}{n}\,\E[\log|A_{n,n}|] \le O\left(\frac{1}{n}\right)\quad \mbox{as }n\to\infty.
\]
Moreover, our assumption \eqref{3.2} about the basis again gives
\[
\frac{1}{n}\log \frac{\max_{0 \le k \le n} \|B_k\|_E^2}{|b_{n,n}|(\textup{cap}(E))^n} \le O\left(\frac{\log{n}}{n}\right)\quad \mbox{as }n\to\infty.
\]
The new component in this proof is added by \eqref{3.10}:
\begin{align*}
-\frac{1}{n}\,\E[\log|P_n(w)|] &= -\frac{1}{n}\, \E\left[\log \left|\sum_{k=0}^n A_{k,n} B_k(w)\right|\right]\\
&= -\frac{1}{n}\left(\log|b_{0,0}| + \E\left[\log \left| A_{0,n} + \sum_{k=1}^n A_{k,n} \frac{B_k(w)}{b_{0,0}}\right|\right]\right)\\ &\le O\left(\frac{1}{n}\right)\quad \mbox{as }n\to\infty.
\end{align*}
Hence \eqref{3.7} holds in the settings of Corollary \ref{cor3.7}.
\end{proof}

\begin{proof}[Proof of Corollary \ref{cor3.8}]
All parts of Corollary \ref{cor3.8} follow from Corollary \ref{cor3.7} provided we show that the corresponding bases satisfy \eqref{3.2}. But for parts (i) and (ii) this is done by the arguments essentially identical to those of proofs for parts (i) and (ii) of Corollary \ref{cor3.5}. Hence we do not repeat them.

(iii) The proof of this part is also similar to that of part (i) of Corollary \ref{cor3.5}. The leading coefficient $b_{n,n}$ of the orthonormal polynomial $B_n$ satisfies \cite[p. 14]{ST}:
\[ 
|b_{n,n}|^{-2} = \inf\left\{ \int |Q_n|^2\,d\mu \ : \ Q_n \mbox{ is a monic polynomial of degree }n \right\}.
\]
To prove the second part of \eqref{3.2}, we again use a monic polynomial $Q_n(z)$ that satisfies $\|Q_n\|_E \le C_1 (\textup{cap}(E))^n,$ see \cite{Wi}, \cite{To} and \cite{An}. It follows that
\begin{align*} 
|b_{n,n}| \ge \left(\int |Q_n|^2\,d\mu \right)^{-1/2} \ge \left(\mu(E)\right)^{-1/2} \|Q_n\|_E^{-1} \ge C_1^{-1} \left(\mu(E)\right)^{-1/2} (\textup{cap}(E))^{-n}.
\end{align*}
The first part of \eqref{3.2} follows from the area Nikolskii type inequality (see Theorem 1.3 of \cite{Pr1} and remark (i) on page 689):
\[
\|B_n\|_E \le C_2 n \left(\int_E |B_n|^2\,dA\right)^{1/2} \le \frac{C_2}{\sqrt{c}}\, n \left(\int_E |B_n|^2\,w\,dA\right)^{1/2} = \frac{C_2}{\sqrt{c}}\, n,
\]
where we used that the weighted area $L_2$ norm of $B_n$ is equal to 1 by definition.
\end{proof}

\textbf{Acknowledgement.} This research was partially supported by the National Security Agency (grant H98230-12-1-0227) and by the AT\&T Foundation.

\noindent Igor E.~Pritsker \\
Department of Mathematics\\ 
Oklahoma State University \\
Stilwater, OK 74078, USA \\
igor@math.okstate.edu

\end{document}